\newtheorem{theorem}{Theorem}[section]
\newtheorem{prop}[theorem]{Proposition}
\newtheorem{lemma}[theorem]{Lemma}
\newtheorem{definition}[theorem]{Definition}
\newtheorem{remark}[theorem]{Remark}
\newtheorem{example}[theorem]{Example}
\def\Z{{\mathbb{Z}}}
\def\N{{\mathbb{N}}}
\def\Y{{\mathbb{Y}}}
\def\bv{{\mathbf{v}}}
\def\rank{\hbox{\rm{rank}}}
\def\Span{\hbox{\rm{Span}}}
\def\D{{\sigma}}
\def\ord{\hbox{\rm{ord}}}
\def\fp{{\mathfrak{p}}}
\def\trdeg{\hbox{\rm{trdeg}}}
\def\Frac{\hbox{\rm{Frac}}}
\begin{document}
\title{Difference Index of Quasi-Prime Difference Algebraic Systems}
\author{Jie Wang}
\date{\today}
\maketitle

\begin{abstract}
This paper is devoted to studying difference indices of quasi-prime difference algebraic systems. We define the quasi dimension polynomial of a quasi-prime difference algebraic system. Based on this, we give the definition of the difference index of a quasi-prime difference algebraic system through a family of pseudo-Jacobian matrices. Some properties of difference indices are proved. In particular, an upper bound of difference indices is given. As applications, an upper bound of the Hilbert-Levin regularity and an upper bound of orders for difference ideal membership problem are deduced.
\end{abstract}

\section{Introduction}
There are several definitions of differential indices of a differential algebraic system in the literature (see for instance \cite{iqr,ldi,d-ca,d-le,d-pa,d-se}). Although they are not completely equivalent, in each case they represent a measure of the implicitness of the given system. In \cite{jie}, the difference index of a quasi-regular difference algebraic system was first defined. In this paper, we will generalize the definition of difference indices to more general difference algebraic systems, i.e. quasi-prime difference algebraic systems.

Suppose $F$ is a couple of difference polynomials, $\Delta$ is the difference ideal generated by $F$, and $\fp$ is a minimal reflexive prime difference ideal over $\Delta$. Denote $\Delta_k$ the algebraic ideal generated by $F$ and the transforms of $F$ with orders lower than $k$ in the corresponding localized polynomial ring at $\fp$. Then we say the system $F$ is {\em quasi-prime} at $\fp$ if $\Delta_k$ is a prime ideal for all $k\in\N^*$ and $\Delta$ is reflexive. For a difference algebraic system $F$ quasi-prime at $\fp$, we associate $F$ with a {\em $\fp$-quasi dimension polynomial}, which is a polynomial of degree one. By virtue of the $\fp$-quasi dimension polynomial, we can give the definition of the difference index of a quasi-prime difference algebraic system, which is called the {\em $\fp$-difference index}. As usual, its definition follows from a certain chain which eventually becomes stationary. Similarly to the case of $\mathfrak{P}$-differential indices in \cite{iqr} and the case of $\fp$-difference indices in \cite{jie}, the chain is established by the sequence of ranks of certain Jacobian submatrices associated with the system $F$. Assume $\omega$ is the $\fp$-difference index of the system $F$. It turns out that for $i\ge e-1$ ($e$ is the highest order of $F$), $\omega$ satisfies:
\[\Delta_{i-e+1+\omega}\cap A_i=\Delta\cap A_i,\]
where $A_i$ is the polynomial ring in the variables with orders no more than $i$, which meets our expectation for difference indices.

This approach enables us to give an upper bound of the $\fp$-difference index of a quasi-prime system. Based on this, we can give several applications of $\fp$-difference indices, including an upper bound of the Hilbert-Levin regularity and an upper bound of orders for difference ideal membership problem.

The paper will be organized as follows. In Section 2, we list some basic notions from difference algebra which will be used later. In Section 3, the $\fp$-quasi dimension polynomial of a quasi-prime difference algebraic system is defined. In Section 4, we introduce a family of pseudo-Jacobian matrices and give the definition of $\fp$-difference indices through studying the ranks of them. In Section 5, some properties of $\fp$-difference indices will be proved. In Section 6, several applications of $\fp$-difference indices are given. In Section 7, we give an example.

\section{Preliminaries}
A {\em difference ring} or {\em $\sigma$-ring} for short $(R,\sigma)$, is a commutative ring $R$ together with a ring endomorphism $\sigma\colon R\rightarrow R$. If $R$ is a field, then we call it a {\em difference field}, or a {\em $\sigma$-field} for short. We usually omit $\sigma$ from the notation, simply refer to $R$ as a $\sigma$-ring or a $\sigma$-field. In this paper, $K$ is always assumed to be a $\D$-field of characteristic $0$.

\begin{definition}
Let $R$ be a $\D$-ring. An ideal $I$ of $R$ is called a {\em $\D$-ideal} if for $a\in R$, $a\in I$ implies $\D(a)\in I$. Suppose $I$ is a $\D$-ideal of $R$, then $I$ is called
\begin{itemize}
  \item {\em reflexive} if $\D(a)\in I$ implies $a\in I$ for $a\in R$;
  \item {\em $\D$-prime} if $I$ is reflexive and a prime ideal as an algebraic ideal.
\end{itemize}
\end{definition}

For a subset $F$ in a $\D$-ring, we denote $[F]$ the $\D$-ideal generated by $F$. Let $K$ be a $\sigma$-field. Suppose $\Y=\{y_1,\ldots,y_n\}$ is a set of $\sigma$-indeterminates over $K$. Then the {\em $\sigma$-polynomial ring} over $K$ in $\Y$ is the polynomial ring in the variables $\Y,\sigma(\Y),\sigma^2(\Y),\ldots$. It is denoted by
\[K\{\Y\}=K\{y_1,\ldots,y_n\}\]
and has a natural $K$-$\sigma$-algebra structure. For more details about difference algebra, one can refer to \cite{wibmer}.

For the later use, we give the classical Jacobian Criterion here.
\begin{lemma}[Jacobian Criterion]\label{pd-lm}
Let $S=K[y_1,\ldots,y_n]$ be the polynomial ring over $K$. Let $I=(f_1,\ldots,f_r)$ be an ideal of $S$ and set $R=S/I$. Let $P$ be a prime ideal of $S$ containing $I$ and assume $\kappa(P)$ is the residue class field of $P$. Then
\[\dim_{\kappa(P)}\kappa(P)\otimes\Omega_{R_P/K}=n-\rank_{\kappa(P)}J,\]
where $J:=(\partial f_i/\partial y_j)_{r\times n}$ is the Jacobian matrix. In particular, if $I$ is itself a prime ideal, then
$\dim_{\kappa(I)}\Omega_{\kappa(I)/K}=n-\rank_{\kappa(I)}J,$
where $\kappa(I)$ is the residue class field of $I$.
\end{lemma}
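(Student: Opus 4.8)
The engine of the proof is the second fundamental (conormal) exact sequence of Kähler differentials attached to the surjection $S\twoheadrightarrow R=S/I$ of $K$-algebras, namely
\[ I/I^2\xrightarrow{\ \delta\ }\Omega_{S/K}\otimes_S R\longrightarrow\Omega_{R/K}\longrightarrow 0, \]
where $\delta(\overline{f})=df\otimes 1$. The plan is to base-change this sequence to the residue field $\kappa(P)$, recognize the leftmost map as the Jacobian, and read off the dimension of the cokernel. First I would fix the prime $\overline{P}=P/I$ of $R$ lying under $P$ and note that $\kappa(P)=S_P/PS_P$ coincides with $\kappa(\overline{P})=R_{\overline P}/\overline{P}R_{\overline P}$, since passing to the quotient by $I$ and localizing commute; this is the residue field appearing throughout.

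Next I would apply the right-exact functor $-\otimes_R\kappa(P)$ to the conormal sequence, obtaining
\[ (I/I^2)\otimes_R\kappa(P)\xrightarrow{\ \overline\delta\ }\Omega_{S/K}\otimes_S\kappa(P)\longrightarrow\Omega_{R/K}\otimes_R\kappa(P)\longrightarrow 0. \]
Because $S=K[y_1,\dots,y_n]$, the module $\Omega_{S/K}$ is free on $dy_1,\dots,dy_n$, so the middle term is the $n$-dimensional space $\kappa(P)^n$. As $I=(f_1,\dots,f_r)$, the source is spanned over $\kappa(P)$ by the classes of $f_1,\dots,f_r$, and $\overline\delta$ sends the class of $f_i$ to $\sum_j(\partial f_i/\partial y_j)\,dy_j$ reduced modulo $P$; that is, $\overline\delta$ is exactly the map $\kappa(P)^r\to\kappa(P)^n$ given by the matrix $J$ evaluated at $P$. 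Hence $\operatorname{im}\overline\delta$ is the row space of $J$ over $\kappa(P)$, of dimension $\rank_{\kappa(P)}J$.

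Finally I would identify the cokernel. Since formation of Kähler differentials commutes with localization, $\Omega_{R_P/K}=\Omega_{R/K}\otimes_R R_P$, whence $\kappa(P)\otimes\Omega_{R_P/K}=\Omega_{R/K}\otimes_R\kappa(P)$, which is precisely the right-hand term above. Exactness then gives
\[ \dim_{\kappa(P)}\kappa(P)\otimes\Omega_{R_P/K}=\dim_{\kappa(P)}\kappa(P)^n-\dim_{\kappa(P)}\operatorname{im}\overline\delta=n-\rank_{\kappa(P)}J, \]
which is the first assertion. For the special case, taking $I$ prime and $P=I$ makes $R$ a domain with $\overline P=(0)$, so $R_P=\Frac(R)=\kappa(I)$ and $\Omega_{R_P/K}=\Omega_{\kappa(I)/K}$; the tensor by $\kappa(I)$ is then vacuous and the formula reduces to $\dim_{\kappa(I)}\Omega_{\kappa(I)/K}=n-\rank_{\kappa(I)}J$.

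The only genuinely delicate point is bookkeeping rather than depth: one must check that after base change $\overline\delta$ is literally the reduction of the Jacobian matrix modulo $P$, so that its image is the row space of dimension $\rank_{\kappa(P)}J$, and one must apply the three identifications—$\kappa(P)=\kappa(\overline P)$, the commutation of $\Omega$ with localization, and $\kappa(P)\otimes_{R_P}(M\otimes_R R_P)=M\otimes_R\kappa(P)$—over the correct rings. Only right-exactness of $\otimes$ is used, so no vanishing of $\operatorname{Tor}$ nor any smoothness or regularity hypothesis enters.
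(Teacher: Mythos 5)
Your proof is correct. The paper itself gives no argument here---it simply cites Eisenbud, Theorem 16.19---and your conormal-sequence computation (base-changing $I/I^2\to\Omega_{S/K}\otimes_S R\to\Omega_{R/K}\to 0$ to $\kappa(P)$, identifying the left map with the Jacobian, and using that K\"ahler differentials commute with localization) is precisely the standard proof of that cited theorem, so your write-up is a faithful, self-contained filling-in of the reference, including the correct handling of the special case $P=I$ where $R_P=\Frac(R)=\kappa(I)$.
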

\begin{proof}
One can find a proof in \cite[Chapter 16, Theorem 16.19]{es-ca}.
\end{proof}

\section{Quasi-prime difference algebraic systems}
Let $K$ be a $\D$-field. Let $a$ be an element in a $\D$-extension field of $K$, $S$ a set of elements in a $\D$-extension field of $K$, and $i\in\N$. Denote $a^{(i)}=\D^i(a), a^{[i]}=\{a,a^{(1)},\ldots,a^{(i)}\}$, $S^{(i)} = \cup_{a\in S}\{a^{(i)}\}$ and $S^{[i]} = \cup_{a\in S} a^{[i]}$. For the $\sigma$-indeterminates $\Y=\{y_1,\ldots,y_n\}$ and $i\in\N$, we will treat the elements of $\Y^{[i]}$ as algebraic indeterminates, and $K[\Y^{[i]}]$ is the polynomial ring in $\Y^{[i]}$.

Throughout the paper let $F=\{f_1,\ldots,f_r\}\subset K\{\Y\}$ be a system of difference polynomials over $K$, $[F]$ the $\D$-ideal generated by $F$, and $\fp\subseteq K\{\Y\}$ a $\D$-prime ideal minimal over $[F]$. Let $\epsilon_{ij}:=\ord_{y_j}(f_i)$ which is the order of $f_i$ with respect to $y_j$ and denote $e:=\max\{\epsilon_{ij}\}$ for the maximal order of transforms which occurs in $F$. We assume that $F$ actually involves difference operator, i.e. $e\ge1$.
We introduce also the following auxiliary polynomial rings and ideals: for every $k\in\N$, $A_k$ denotes the polynomial ring $A_k:=K[\Y^{[k]}]$ and $\Delta_k:=(f_1^{[k-1]},\ldots,f_r^{[k-1]})\subseteq A_{k-1+e}$. We set $\Delta_0:=(0)$ by definition.

For each non-negative integer $k$ we write $B_k$ for the local ring obtained from $A_k$ after localization at the prime ideal $A_k\cap\fp$ and we denote $\fp_k:=A_{k-1+e}\cap\fp$. For the sake of simplicity, we preserve the notation $\Delta_k$ for the ideal generated by $f_1^{[k-1]},\ldots,f_r^{[k-1]}$ in the local ring $B_{k-1+e}$ and denote $\Delta$ the $\D$-ideal generated by $F$ in $K\{\Y\}_{\fp}$.
\begin{definition}
We say that the system $F$ is {\em quasi-prime} at $\fp$ if $\Delta_k$ is a prime ideal in the ring $B_{k-1+e}$ for all $k\in\N$ and $\Delta$ is reflexive.
\end{definition}

If the system $F$ is quasi-prime at $\fp$, then by the minimality of $\fp$, $\Delta$ agrees with $\fp$ in $K\{\Y\}_{\fp}$, since $\Delta$ itself is a $\D$-prime ideal.
\begin{remark}\label{pd-re}
If the $\D$-ideal $[F]\subseteq K\{\Y\}$ is already a $\D$-prime ideal, the minimality of $\fp$ implies that $\fp=[F]$ and all our results remain true considering the rings $A_k$ and the $\D$-ideal $[F]$ without localization. In this case if $F$ is quasi-prime at $[F]$ we will say simply that $F$ is quasi-prime.
\end{remark}

In this paper, we always assume that $F$ is a difference algebraic system which is quasi-prime at $\fp$.

For a matrix $E$ over $K$, we use $E^{(i)}$ to denote the matrix whose elements are the $i$-th transforms of the corresponding elements of $E$.

\begin{lemma}\label{pd-lm2}
For a matrix $E$ over $K$, $\rank(E^{(1)})=\rank(E)$.
\end{lemma}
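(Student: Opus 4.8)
The plan is to characterize the rank by means of non-vanishing minors and then exploit that $\D$, being a field endomorphism, is injective and commutes with the determinant. Since the determinant is a universal polynomial expression in the matrix entries, applying $\D$ entrywise is compatible with forming minors, and injectivity guarantees that vanishing is preserved in both directions.

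First I would record that $\D\colon K\to K$ is injective. Indeed, $K$ is a field and $\D$ is a ring endomorphism with $\D(1)=1$, so $\ker\D$ is a proper ideal of the field $K$; hence $\ker\D=(0)$ and $\D(a)=0$ if and only if $a=0$ for $a\in K$. Next I would invoke the standard description of rank over a field: $\rank(E)$ is the largest integer $s$ for which some $s\times s$ submatrix of $E$ has nonzero determinant. Fix such an index set and let $M$ be the corresponding $s\times s$ submatrix of $E$, so that the submatrix $M^{(1)}$ of $E^{(1)}$ determined by the same indices is obtained from $M$ by applying $\D$ entrywise. Because the determinant is a polynomial with integer coefficients in the entries and $\D$ is a ring homomorphism, we get $\det(M^{(1)})=\D(\det M)$.

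Combining these two facts finishes the argument: by injectivity of $\D$, the scalar $\det(M^{(1)})=\D(\det M)$ is zero if and only if $\det M$ is zero. Thus, for every size $s$, the index choices giving a nonzero $s\times s$ minor of $E$ are exactly those giving a nonzero $s\times s$ minor of $E^{(1)}$, so the largest attainable $s$ coincides for the two matrices and $\rank(E^{(1)})=\rank(E)$. I do not expect any genuine obstacle here; the only point that requires a word of justification is the injectivity of $\D$, which is automatic since $K$ is a field, while the compatibility of $\D$ with determinants is merely the functoriality of the determinant under ring homomorphisms.
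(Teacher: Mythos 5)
Your proof is correct and follows the same route as the paper: the paper's (much terser) argument also rests on the fact that nonzero maximal minors are preserved because $\D$ is injective on $K$. You have simply made explicit the two ingredients the paper leaves implicit, namely that $\det(M^{(1)})=\D(\det M)$ and that a field endomorphism has trivial kernel.
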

\begin{proof}
It is clear that the maximal nonzero minors of $E^{(1)}$ and $E$ have the same order since the difference operator on $K$ is injective. It follows that $\rank(E^{(1)})=\rank(E)$.
\end{proof}

\begin{lemma}\label{pd-lemma1}
Let $E_1,E_2,\ldots,E_t\in K^{p\times q}$ and
\[M_k:=\begin{pmatrix}
E_1&E_2&\cdots&E_t&&&\\
&E_1^{(1)}&E_2^{(1)}&\cdots&E_t^{(1)}&&\\
&&\ddots&\ddots&\ddots&\ddots&\\
&&&E_1^{(k-1)}&E_2^{(k-1)}&\cdots&E_t^{(k-1)}
\end{pmatrix}.\]
Then for $k$ large enough, there exists $d'\in\N$ and $s'\in\Z$ such that
\begin{equation}\label{pd-eq1}
\rank(M_k)=d'k+s'.
\end{equation}
Moreover, the least $k$ such that the equality (\ref{pd-eq1}) holds is bounded by $(t-1)(\min\{p,q\}+1)$.
\end{lemma}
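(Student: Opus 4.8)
The plan is to track the successive rank increments $\delta_k:=\rank(M_k)-\rank(M_{k-1})$, where we write $a_k:=\rank(M_k)$ and set $a_0:=0$ by convention. I claim these increments form a non-increasing sequence of non-negative integers; any such sequence is eventually constant, and its eventual value is the $d'$ of the statement, with the resulting additive constant being $s'$, which yields (\ref{pd-eq1}).

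The heart of the argument is the inequality $a_{k+1}\le 2a_k-a_{k-1}$. To prove it, I would exhibit three overlapping submatrices inside $M_{k+1}$. The top $k$ block-rows form $[M_k\mid 0]$, since the last block-column vanishes on them, so their row space $U$ has dimension $a_k$. The bottom $k$ block-rows (rows $2$ through $k+1$) have a vanishing first block-column, and after deleting it they become $M_k^{(1)}$, so by Lemma \ref{pd-lm2} their row space $L$ also has dimension $a_k$. The $k-1$ block-rows shared by these two families (rows $2$ through $k$) become, after deleting the vanishing first and last block-columns, the matrix $M_{k-1}^{(1)}$, whose row space $W$ has dimension $a_{k-1}$, again by Lemma \ref{pd-lm2}. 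Since the rows of $M_{k+1}$ are exhausted by $U$ and $L$, while $U\cap L\supseteq W$, the dimension formula gives $a_{k+1}=\dim U+\dim L-\dim(U\cap L)\le 2a_k-a_{k-1}$, i.e.\ $\delta_{k+1}\le\delta_k$. As $M_k$ is a submatrix of $M_{k+1}$ we have $\delta_k\ge 0$, so the increments stabilize at some value $d'$ from an index $\omega$ onward, and for $k\ge\omega$ we get $\rank(M_k)=d'k+s'$ with $s'=a_\omega-d'\omega$.

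For the quantitative bound I would first use the size constraints $\rank(M_k)\le\min(kp,(k+t-1)q)$, which upon letting $k\to\infty$ force $d'\le\min\{p,q\}$. Next, summing the non-negative excesses gives $s'=\sum_{k\ge 1}(\delta_k-d')$, while the indices with $\delta_k>d'$ are exactly $1,\ldots,\omega-1$, each contributing at least $1$; hence $\omega\le s'+1$. It therefore suffices to show $s'\le(t-1)\min\{p,q\}$, since then $\omega\le(t-1)\min\{p,q\}+1\le(t-1)(\min\{p,q\}+1)$. The route I envisage is to analyse the ``boundary'' interface carried by the last $t-1$ block-columns of $M_k$: only those columns can interact with the block-row introduced at the next step, so the stabilized excess $s'$ should equal the dimension of a fixed subspace supported on at most $t-1$ block-columns, whence $s'\le(t-1)q$; transposing $M_k$ preserves its banded shape and its rank while interchanging the roles of $p$ and $q$, giving symmetrically $s'\le(t-1)p$ and hence the desired bound.

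The non-increasing property is robust and comes cleanly from Lemma \ref{pd-lm2}, so I expect the main obstacle to be this last estimate on $s'$. Concretely, when $d'$ lies strictly between $0$ and $\min\{p,q\}$ the crude size bounds no longer pin $s'$ down, and one must instead bound precisely how long the increments can plateau before reaching $d'$. Making the interface heuristic rigorous—tracking which previously chosen pivots a new block-row can reach through the bandwidth-$t$ coupling, and extracting the factor $t-1$ from it—is where the real work lies.
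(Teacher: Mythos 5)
Your first assertion is proved correctly, and by a genuinely different route than the paper's. The paper runs a bookkeeping Gaussian elimination on $M_k$, tracking ``interface'' subspaces $U_j\subseteq K^{(t-1)q}$ spanned by leftover row fragments supported on the last $t-1$ block columns, and shows $\dim U_j$ is non-decreasing and, once stationary, stays stationary. You instead prove concavity of the rank sequence, $a_{k+1}\le 2a_k-a_{k-1}$, by applying the dimension formula to the spans $U$ (block rows $1,\dots,k$), $L$ (block rows $2,\dots,k+1$) and $W\subseteq U\cap L$ (block rows $2,\dots,k$), identifying $L$ and $W$ with row spaces of $M_k^{(1)}$ and $M_{k-1}^{(1)}$ via Lemma \ref{pd-lm2}. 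This is clean, complete, and arguably simpler than the paper's elimination for the existence of the eventual linear formula.

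However, the quantitative bound --- the actual second assertion of the lemma --- has a genuine gap, which you flag yourself. Your reduction $k_0\le s'$ is sound (each pre-stabilization increment exceeds $d'$ by at least $1$), but the key estimate $s'\le(t-1)\min\{p,q\}$ is never proved, and neither of your two suggested routes delivers it. The size bounds $\rank(M_k)\le\min\{kp,(k+t-1)q\}$ control $s'$ only in the extreme cases $d'=p$ or $d'=q$; when $0<d'<\min\{p,q\}$ they say nothing, as you concede. And the appeal to transposition is not sound as stated: $M_k^{\tau}$ is \emph{not} a matrix of the form $M_{k'}$ built from $E_1^{\tau},\dots,E_t^{\tau}$ --- it is a lower-triangular banded matrix with truncated corners, i.e.\ of the type treated in Lemma \ref{di-lemma2}, whose boundary (and hence whose constant term) behaves differently --- so $p$ and $q$ do not interchange for free. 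One half of your heuristic can in fact be made rigorous in your own style: for $k\gg0$ split $M_{2k}$ into its first $k$ and last $k$ block rows; by Lemma \ref{pd-lm2} each span has dimension $kd'+s'$, their sum has dimension $2kd'+s'$, so their intersection has dimension exactly $s'$, and that intersection is supported on the $t-1$ block columns $k+1,\dots,k+t-1$, giving $s'\le(t-1)q$. But the companion bound involving $p$ (needed when $p<q$) does not follow by any symmetry, and supplying it is exactly the content of the paper's elimination argument, whose interface spaces are simultaneously subspaces of $K^{(t-1)q}$ and spanned by leftover rows coming from $t-1$ block rows of $p$ rows each --- that is how $\min\{p,q\}$ enters. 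As it stands, your proposal proves the first statement of the lemma but not the bound $(t-1)(\min\{p,q\}+1)$.
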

\begin{proof}
For the sake of convenience, for each pair $m,n\in\N, m\le n$, let us define an operator $\pi_n^m$ on subspaces of $K^n$, $$\pi_n^m(V):=\{\bv\in K^m\mid (\mathbf{0},\bv)\in V, \mathbf{0}\in K^{n-m}\},$$
where $V$ is a subspace of $K^n$.

Suppose $k\ge t-1$. We will apply the Gaussian elimination method to $M_k$ with some changes. First, apply the Gaussian elimination method to the submatrix of $M_k$
\[C:=\left( \begin{array}{cccc:cccc:}
\cdashline{5-8}
E_1&E_2&\cdots&\cdots&E_t&&&\\
&E_1^{(1)}&E_2^{(1)}&\cdots&E_{t-1}^{(1)}&E_t^{(1)}&&\\
&&\ddots&\ddots&\ddots&\ddots&\ddots&\\
&&&E_1^{(t-2)}&E_2^{(t-2)}&\cdots&\cdots&E_t^{(t-2)}\\
\cdashline{5-8}
\end{array} \right),\]
and denote the resulting matrix by $A$. Then we obtain several rows of $A$ containing nonzero elements only in the dotted line area, with the first 1's of the corresponding rows lying in distinct columns. Let $i_1,\ldots,i_m$ be the row indices of these rows of $A$. Let $B$ be the submatrix of $A$ obtained by removing the first $(t-1)q$ columns and these rows whose row indices are not $i_1,\ldots,i_m$ from $A$. Let $U_0$ be subspace of $K^{(t-1)q}$ spanned by the row vectors of $B$.

Now first perform row reductions to the next block matrix $\begin{pmatrix}
E_1^{(t-1)}&E_2^{(t-1)}&\cdots&E_t^{(t-1)}\end{pmatrix}$ by using the row vectors of $B$, and then apply the Gaussian elimination method to the resulting matrix itself. We again obtain some rows containing nonzero elements only in the dotted line area:
\[\left( \begin{array}{cccc:cccc:}
\cdashline{5-8}
E_1^{(1)}&E_2^{(1)}&\cdots&\cdots&E_t^{(1)}&&&\\
&E_1^{(2)}&E_2^{(2)}&\cdots&E_{t-1}^{(2)}&E_t^{(2)}&&\\
&&\ddots&\ddots&\ddots&\ddots&\ddots&\\
&&&E_1^{(t-1)}&E_2^{(t-1)}&\cdots&\cdots&E_t^{(t-1)}\\
\cdashline{5-8}
\end{array} \right).\]
As before, let the fragments of these rows in the dotted line area span the subspace $U_1\subseteq K^{(t-1)q}$. Denote the subspace spanned by the rows of the submatrix $C$ by $W$. We see that $U_0=\pi_{(2t-2)q}^{(t-1)q}(W)$ and $U_1=\pi_{(2t-1)q}^{(t-1)q}(P)$, where $P$ is the subspace spanned by the row vectors of $E_1E_2\cdots E_t\times 0^{p\times (t-1)q}$ and the vectors in $\{0\}^q\times W^{(1)}.$ It follows that $U_0^{(1)}\subseteq U_1$.

Denote the row vectors of the submatrix 
$$\begin{pmatrix}E_1^{(j+t-2)}&E_2^{(j+t-2)}&\cdots&E_t^{(j+t-2)}\end{pmatrix}$$
by $V_j$, and then $V_{j+1}=V_j^{(1)}$. Perform the procedure as above, and we recursively define $$U_{j}:=\pi_{tq}^{(t-1)q}(\Span(U_{j-1}\times \{0\}^{q}\cup V_{j}))$$
for $j\ge 1$. We will show that $U_j^{(1)}\subseteq U_{j+1}$ for all $j\ge 0$ and if $U_j^{(1)}=U_{j+1}$, then $U_{j+1}^{(1)}=U_{j+2}$.

Let us do induction on $j$. The case $j=0$ has proved above. Now suppose $j\ge 1$. Then by the induction hypothesis, $U_j^{(1)}=\pi_{tq}^{(t-1)q}(\Span(U_{j-1}^{(1)}\times \{0\}^{q}\cup V_{j}^{(1)})) \subseteq\pi_{tq}^{(t-1)q}(\Span(U_{j}\times \{0\}^{q}\cup V_{j+1}))=U_{j+1}$, and if $U_{j-1}^{(1)}=U_{j}$, then $U_{j}^{(1)}=U_{j+1}$. So by Lemma \ref{pd-lm2}, $\dim(U_j)=\dim(U_j^{(1)})\le\dim(U_{j+1})$, and if $\dim(U_j)=\dim(U_{j+1})$, then $\dim(U_{j+1})=\dim(U_{j+2})$. It follows that $(\dim(U_j))_{j\in\N}$ is a non-decreasing sequence and eventually stabilizes at some constant at most $(t-1)\min\{p,q\}+1$ steps since the dimensions of the subspaces $U_j$ are no larger than $(t-1)\min\{p,q\}$. So there exists a non-negative integer $r\le(t-1)\min\{p,q\}$ such that for $j\ge r$, $\dim(U_j)=\dim(U_{j+1})$ and $\dim(\Span(U_{j}\times \{0\}^{q}\cup V_{j+1}))=\dim(\Span(U_{j+1}\times \{0\}^{q}\cup V_{j+2}))$ by Lemma \ref{pd-lm2}. As a consequence, the rank of the corresponding matrix $M_{j+t-1}$ will increase by a constant at each step. That is to say, for $k$ large enough, there exist $d',s'\in\N$ such that
\[\rank(M_k)=d'k+s',\]
and the least $k$ such that the above equality holds is bounded by $(t-1)\min\{p,q\}+t-1=(t-1)(\min\{p,q\}+1)$.
\end{proof}

Let us define
\begin{align*}
J_k:&=\frac{\partial{(F,F^{(1)},\ldots,F^{(k-1)}})}{\partial{(\Y,\Y^{(1)},\ldots,\Y^{(k-1+e)})}}\\
&=\begin{pmatrix}
\frac{\partial F}{\partial\Y}&\frac{\partial F}{\partial\Y^{(1)}}&\cdots&\frac{\partial F}{\partial\Y^{(e)}}&&&\\
&\frac{\partial F^{(1)}}{\partial\Y^{(1)}}&\frac{\partial F^{(1)}}{\partial\Y^{(2)}}&\cdots&\frac{\partial F^{(1)}}{\partial\Y^{(e+1)}}&&\\ &&\ddots&\ddots&\ddots&\ddots&\\
&&&\frac{\partial F^{(k-1)}}{\partial\Y^{(k-1)}}&\frac{\partial F^{(k-1)}}{\partial\Y^{(k)}}&\cdots&\frac{\partial F^{(k-1)}}{\partial\Y^{(k-1+e)}}
\end{pmatrix},
\end{align*}
where each $\frac{\partial F^{(p)}}{\partial \Y^{(q)}}$ denotes the Jacobian matrix $(\partial(f_1^{(p)},\ldots,f_r^{(p)})/\partial(y_1^{(q)},\ldots,y_n^{(q)}))_{r\times n}$.

Since the partial derivative operator and the difference operator are commutative, we have
\begin{equation*}
J_k=\begin{pmatrix}
\frac{\partial F}{\partial\Y}&\frac{\partial F}{\partial\Y^{(1)}}&\cdots&\frac{\partial F}{\partial\Y^{(e)}}&&&\\
&(\frac{\partial F}{\partial\Y})^{(1)}&(\frac{\partial F}{\partial\Y^{(1)}})^{(1)}&\cdots&(\frac{\partial F}{\partial\Y^{(e)}})^{(1)}&&\\ &&\ddots&\ddots&\ddots&\ddots&\\
&&&(\frac{\partial F}{\partial\Y})^{(k-1)}&(\frac{\partial F}{\partial\Y^{(1)}})^{(k-1)}&\cdots&(\frac{\partial F}{\partial\Y^{(e)}})^{(k-1)}
\end{pmatrix}.
\end{equation*}

Denote $\kappa(\Delta_k)$ the residue class field of $\Delta_k$ in the ring $B_{k-1+e}$, $\kappa(\fp_k)$ the residue class field of $\fp_k$ in the ring $A_{k-1+e}$ and $\kappa$ the residue class field of $\fp$. To define the $\fp$-quasi dimension polynomial of the system $F$, we assume that the rank of the matrix $J_k$ over $\kappa(\Delta_{k+i})$ does not depend on $i$, where $i\in\N$. That is to say, the rank of the matrix $J_k$ considered alternatively over $\kappa(\Delta_k)$, or over $\kappa(\fp_k)$, or over $\kappa$ is always the same.
\begin{theorem}\label{pd-thm}
Suppose $F$ is a difference algebraic system which is quasi-prime at $\fp$. Let $\psi(k):=\trdeg_K(\kappa(\Delta_k))$. Then for $k$ large enough, there exists $d\in\N$ and $s\in\Z$ such that
\[\psi(k)=dk+s.\]
Moreover, the least $k$ such that the above equality holds is bounded by $e(\min\{r,n\}+1)$.
\end{theorem}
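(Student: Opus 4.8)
The plan is to reduce the statement to Lemma~\ref{pd-lemma1} by first rewriting $\psi(k)$ in terms of the rank of the Jacobian matrix $J_k$, and then recognizing $J_k$, reduced modulo $\fp$, as an instance of the matrix $M_k$ in that lemma. The quasi-prime hypothesis enters precisely in the first step: it guarantees that each $\Delta_k$ is prime, which is what licenses the Jacobian computation of the transcendence degree.

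First I would count variables and apply the Jacobian Criterion. The ring $A_{k-1+e}=K[\Y^{[k-1+e]}]$ has $n(k+e)$ indeterminates, and $\Delta_k$ is generated in $B_{k-1+e}$ by $f_1^{[k-1]},\ldots,f_r^{[k-1]}$, whose Jacobian with respect to $(\Y,\Y^{(1)},\ldots,\Y^{(k-1+e)})$ is exactly $J_k$. Since $F$ is quasi-prime at $\fp$, the ideal $\Delta_k$ is prime, so $\kappa(\Delta_k)=\Frac(B_{k-1+e}/\Delta_k)$ is a finitely generated field extension of $K$; as $\mathrm{char}\,K=0$ this extension is separably generated, whence $\dim_{\kappa(\Delta_k)}\Omega_{\kappa(\Delta_k)/K}=\trdeg_K\kappa(\Delta_k)=\psi(k)$. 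The Kähler-differential computation underlying Lemma~\ref{pd-lm}, applied to the generators $f_i^{[k-1]}$ in the localized ring (whose residue field at $\Delta_k$ is again $\kappa(\Delta_k)$), gives the presentation $\Omega_{\kappa(\Delta_k)/K}\cong\kappa(\Delta_k)^{\,n(k+e)}/\langle\text{rows of }J_k\rangle$, so that
\[\psi(k)=n(k+e)-\rank_{\kappa(\Delta_k)}J_k.\]
By the standing assumption that the rank of $J_k$ is the same over $\kappa(\Delta_k)$, over $\kappa(\fp_k)$, and over $\kappa$, I may compute this rank over $\kappa$.

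Next I would match $J_k$ with Lemma~\ref{pd-lemma1}. Reducing $J_k$ modulo $\fp$ and using that partial differentiation commutes with $\D$, the block form of $J_k$ becomes exactly $M_k$ with $t=e+1$ blocks $E_j=(\partial F/\partial\Y^{(j-1)})\bmod\fp\in\kappa^{r\times n}$, so $p=r$ and $q=n$. Here $\kappa$ is a $\D$-field on which $\D$ is injective, because $\fp$ is reflexive, so Lemma~\ref{pd-lemma1} applies verbatim over $\kappa$: for $k$ large there are $d'\in\N$ and $s'\in\Z$ with $\rank(J_k)=d'k+s'$, and the least such $k$ is bounded by $(t-1)(\min\{p,q\}+1)=e(\min\{r,n\}+1)$. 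Substituting yields $\psi(k)=n(k+e)-d'k-s'=dk+s$ with $d:=n-d'$ and $s:=ne-s'$, and $\psi$ is linear starting from exactly the same value of $k$, so the bound $e(\min\{r,n\}+1)$ transfers directly.

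Finally I would check $d\in\N$ and $s\in\Z$: passing from $M_k$ to $M_{k+1}$ adjoins $r$ rows and $n$ new columns, so the stable increment satisfies $d'\le\min\{r,n\}\le n$, forcing $d=n-d'\ge0$, while $s=ne-s'\in\Z$ is automatic. The main obstacle I anticipate is the bookkeeping around the localization in the first step, namely confirming that replacing $A_{k-1+e}$ by $B_{k-1+e}$ alters neither the residue field $\kappa(\Delta_k)$ nor the relevant rank, so that the identity $\psi(k)=n(k+e)-\rank J_k$ is legitimate; it is the combination of the quasi-prime hypothesis (primality of every $\Delta_k$) with $\mathrm{char}\,K=0$ that makes this identity hold.
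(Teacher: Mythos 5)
Your proposal is correct and follows essentially the same route as the paper's proof: apply the Jacobian Criterion (Lemma~\ref{pd-lm}) to get $\psi(k)=(k+e)n-\rank_{\kappa}(J_k)$, then recognize $J_k$ modulo $\fp$ as the matrix $M_k$ of Lemma~\ref{pd-lemma1} with $t=e+1$, $p=r$, $q=n$, which yields both the linearity and the bound $e(\min\{r,n\}+1)$. Your extra checks (injectivity of $\D$ on $\kappa$ via reflexivity, and $d=n-d'\ge 0$) only add care the paper leaves implicit, and your constant $s=ne-s'$ corrects what appears to be a sign slip in the paper's ``$s=s'+en$''.
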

\begin{proof}
By the property of K\"ahler differentials, $\psi(k)=\trdeg_K(\kappa(\Delta_k))=\dim_{\kappa(\Delta_k)}\Omega_{\kappa(\Delta_k)/K}$. By Lemma \ref{pd-lm}, $\dim_{\kappa(\fp_k)}\kappa(\fp_k)\otimes\Omega_{\kappa(\Delta_k)/K}=\dim_{\kappa(\Delta_k)}\Omega_{\kappa(\Delta_k)/K}
=(k+e)n-\rank_{\kappa(\fp_k)}(J_k)=(k+e)n-\rank_{\kappa}(J_k)$. It follows $\psi(k)=(k+e)n-\rank_{\kappa}(J_k)$. Thus the conclusions of the theorem follow from Lemma \ref{pd-lemma1} by setting $d=n-d'$ and $s=s'+en$.
\end{proof}
\begin{definition}
In the above theorem, $\psi(k)=dk+s$ is called the {\em $\fp$-quasi dimension polynomial} of the system $F$, and the least $k$ such that the $\fp$-quasi dimension polynomial holds is called the {\em $\fp$-quasi regularity degree} of $F$, which is denoted by $\rho$.
\end{definition}

\section{The definition of $\fp$-difference index}
Following \cite{iqr}, we introduce a family of pseudo-Jacobian matrices which we need in order to define the concept of difference index.
\begin{definition}
For each $k\in\N$ and $i\in\N_{\ge e-1}$ (i.e. $i\in\N$ and $i\ge e-1$), we define the $kr\times kn$-matrix $J_{k,i}$ as follows:
\begin{align*}
J_{k,i}:&=\frac{\partial{(F^{(i-e+1)},F^{(i-e+2)},\ldots,F^{(i-e+k)}})}{\partial{(\Y^{(i+1)},\Y^{(i+2)},\ldots,\Y^{(i+k)})}}\\
&=\begin{pmatrix}
\frac{\partial F^{(i-e+1)}}{\partial \Y^{(i+1)}}&0&\cdots&0\\ \frac{\partial F^{(i-e+2)}}{\partial \Y^{(i+1)}}&\frac{\partial F^{(i-e+2)}}{\partial \Y^{(i+2)}}&\cdots&0\\ \vdots&\vdots&\ddots&\vdots\\ \frac{\partial F^{(i-e+k)}}{\partial \Y^{(i+1)}}&\frac{\partial F^{(i-e+k)}}{\partial \Y^{(i+2)}}&\cdots&\frac{\partial F^{(i-e+k)}}{\partial \Y^{(i+k)}}
\end{pmatrix},
\end{align*}
where each $\frac{\partial F^{(p)}}{\partial \Y^{(q)}}$ denotes the Jacobian matrix $(\partial(f_1^{(p)},\ldots,f_r^{(p)})/\partial(y_1^{(q)},\ldots,y_n^{(q)}))_{r\times n}$.
\end{definition}

Since the partial derivative operator and the difference operator are commutative, we have
\[J_{k,i}=\begin{pmatrix}
(\frac{\partial F}{\partial \Y^{(e)}})^{(i-e+1)}&0&\cdots&0\\ (\frac{\partial F}{\partial \Y^{(e-1)}})^{(i-e+2)}&(\frac{\partial F}{\partial \Y^{(e)}})^{(i-e+2)}&\cdots&0\\ \vdots&\vdots&\ddots&\vdots\\ (\frac{\partial F}{\partial \Y^{(e-k+1)}})^{(i-e+k)}&(\frac{\partial F}{\partial \Y^{(e-k+2)}})^{(i-e+k)}&\cdots&(\frac{\partial F}{\partial \Y^{(e)}})^{(i-e+k)}
\end{pmatrix},\]
where we set that $\frac{\partial F}{\partial \Y^{(j)}}=0$ if $j<0$.

Note that $J_{k,i+1}=J_{k,i}^{(1)}$.

\begin{definition}
For $k\in\N$ and $i\in\N_{\ge e-1}$, we define $\mu_{k,i}\in\N$ as follows:
\begin{itemize}
  \item $\mu_{0,i}:=0$;
  \item $\mu_{k,i}:=\dim_{\kappa}\ker(J_{k,i}^{\tau})$, for $k\ge1$, where $J_{k,i}^{\tau}$ denotes the usual transpose of the matrix $J_{k,i}$. In particular $\mu_{k,i}=kr-\rank_{\kappa}(J_{k,i})$.
\end{itemize}
\end{definition}

\begin{prop}
Let $k\in\N$ and $i\in\N_{\ge e-1}$. Then $\mu_{k,i}=\mu_{k,i+1}$.
\end{prop}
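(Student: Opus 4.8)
The plan is to reduce the asserted equality $\mu_{k,i}=\mu_{k,i+1}$ to an invariance statement about ranks. For $k=0$ both sides vanish by definition, so I would assume $k\ge 1$. From the definition $\mu_{k,i}=kr-\rank_{\kappa}(J_{k,i})$, and likewise with $i$ replaced by $i+1$, the claim is equivalent to
\[\rank_{\kappa}(J_{k,i})=\rank_{\kappa}(J_{k,i+1}).\]
The key structural input is the identity $J_{k,i+1}=J_{k,i}^{(1)}$ recorded just above the statement: moving the index from $i$ to $i+1$ merely applies the difference operator $\D$ entrywise.

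Next I would unwind what $\rank_{\kappa}$ means: one first reduces the polynomial entries of $J_{k,i}$ modulo $\fp$ to obtain a matrix over the residue field $\kappa$, and then takes the ordinary rank over $\kappa$. The step to justify here is that reduction modulo $\fp$ commutes with $\D$. Since $\fp$ is a $\D$-ideal, $\D$ descends to an endomorphism of $K\{\Y\}/\fp$; since $\fp$ is reflexive (it is $\D$-prime, as $F$ is quasi-prime at $\fp$), this induced map is injective and hence extends to a field endomorphism of the fraction field $\kappa$. Consequently the image in $\kappa$ of $J_{k,i}^{(1)}$ is exactly the $\D$-transform of the image in $\kappa$ of $J_{k,i}$, so that over $\kappa$ we again have $\overline{J_{k,i+1}}=\overline{J_{k,i}}^{(1)}$.

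It then remains to show that rank over $\kappa$ is unchanged under the entrywise action of $\D$, which is precisely Lemma \ref{pd-lm2}. Although that lemma is stated for matrices over $K$, its proof uses only that the difference operator on the base field is injective, and $\D$ is injective on $\kappa$ by the previous paragraph; thus the argument applies verbatim with $K$ replaced by $\kappa$. Concretely, every minor of $\overline{J_{k,i}}^{(1)}$ is the $\D$-image of the corresponding minor of $\overline{J_{k,i}}$ (the determinant being a polynomial in the entries and $\D$ a ring homomorphism), and by injectivity such a minor vanishes if and only if the original one does; hence the maximal nonvanishing minors of the two matrices have equal size and the ranks agree.

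The proof is short once this viewpoint is fixed. The one point that genuinely needs care—and the step I expect to be the main obstacle—is the passage from $K$ to $\kappa$, namely verifying that $\D$ descends to an \emph{injective} endomorphism of $\kappa$ so that Lemma \ref{pd-lm2} may be legitimately transported. This is exactly where the reflexivity of $\fp$ is used, and it is what makes the rank, and therefore $\mu_{k,i}$, insensitive to the shift in the index $i$.
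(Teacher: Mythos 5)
Your proof is correct and follows essentially the same route as the paper: the identity $J_{k,i+1}=J_{k,i}^{(1)}$ combined with the rank invariance of Lemma \ref{pd-lm2}. Your additional verification that $\D$ descends to an injective endomorphism of the residue field $\kappa$ (so that the lemma, stated for matrices over $K$, legitimately applies over $\kappa$) makes explicit a point the paper's one-line proof leaves implicit; the only slip is an attribution---$\fp$ is reflexive because it is assumed $\D$-prime as a standing hypothesis on $\fp$, not as a consequence of $F$ being quasi-prime at $\fp$.
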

\begin{proof}
Since $J_{k,i+1}=J_{k,i}^{(1)}$ for any $k\in\N$ and any $i\in\N_{\ge e-1}$, then $\mu_{k,i}=\mu_{k,i+1}$ follows from Lemma \ref{pd-lm2}.
\end{proof}

The previous proposition shows that the sequence $\mu_{k,i}$ does not depend on the index $i$. Therefore, in the sequel, we will write $\mu_k$ instead of $\mu_{k,i}$, for any $i\in\N_{\ge e-1}$.

For $k\in\N$ and $i\in\N_{\ge e-1}$, we denote $\Omega_{i,k}$ the residue class field of $\Delta_{i-e+1+k}\cap B_i$ in the ring $B_i$. As an additional hypothesis on the system $F$, we assume that the rank of the matrix $J_{k,i}$ over $\kappa(\Delta_{i-e+1+k+s})$ does not depend on $s$, where $s\in\N$. That is to say, we assume that the rank of the matrix $J_{k,i}$ considered alternatively over $\kappa(\Delta_{i-e+1+k})$, or over $\kappa(\fp_{i-e+1+k})$, or over $\kappa$ is always the same.
\begin{prop}\label{di-prop1}
Assume the $\fp$-quasi dimension polynomial of $F$ is $\psi(k)=dk+s$ and the $\fp$-quasi regularity degree is $\rho$. Let $k\in\N$ and $i\in\N_{\ge e-1}$. Then
\begin{enumerate}
  \item The transcendence degree of the field extension
  $$\Frac(B_{i}/(\Delta_{i-e+1+k}\cap B_{i}))\hookrightarrow \Frac(B_{i+k}/\Delta_{i-e+1+k})$$
  is $k(n-r)+\mu_{k}$.
  \item For $i+k\ge\rho+e-1$, the following identity holds:
  $$\trdeg_K(\Frac(B_{i}/(\Delta_{i-e+1+k})\cap B_{i}))=d(i+1)+(d+r-n)k+s-ed-\mu_{k}.$$
\end{enumerate}
\end{prop}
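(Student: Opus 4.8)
The plan is to compute both transcendence degrees by decomposing the relevant field extensions into a tower and applying the Jacobian Criterion (Lemma~\ref{pd-lm}) together with the $\fp$-quasi dimension polynomial from Theorem~\ref{pd-thm}. For part (1), the key observation is that the ring $B_{i+k}/\Delta_{i-e+1+k}$ is obtained from $B_i/(\Delta_{i-e+1+k}\cap B_i)$ by adjoining the $kn$ variables $\Y^{(i+1)},\ldots,\Y^{(i+k)}$ subject to the $kr$ equations $F^{(i-e+1)},\ldots,F^{(i-e+k)}$. The Jacobian of exactly these equations with respect to exactly these variables is the matrix $J_{k,i}$. So I would apply Lemma~\ref{pd-lm} relatively over the base field $\Frac(B_i/(\Delta_{i-e+1+k}\cap B_i))$: the transcendence degree of the extension equals the number of new variables minus the rank of $J_{k,i}$ over the residue field, i.e. $kn-\rank_\kappa(J_{k,i})=kn-(kr-\mu_k)=k(n-r)+\mu_k$, using the definition $\mu_k=kr-\rank_\kappa(J_{k,i})$ and the standing hypothesis that this rank is field-independent.

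For part (2), I would combine part (1) with the additivity of transcendence degree in the tower
\[
K\hookrightarrow\Frac(B_i/(\Delta_{i-e+1+k}\cap B_i))\hookrightarrow\Frac(B_{i+k}/\Delta_{i-e+1+k}).
\]
The top of this tower is $\kappa(\Delta_{i-e+1+k})$, whose transcendence degree over $K$ is $\psi(i-e+1+k)$. Since we assume $i+k\ge\rho+e-1$, equivalently $i-e+1+k\ge\rho$, the index lies beyond the $\fp$-quasi regularity degree, so $\psi(i-e+1+k)=d(i-e+1+k)+s$. Therefore
\[
\trdeg_K\bigl(\Frac(B_i/(\Delta_{i-e+1+k}\cap B_i))\bigr)=\psi(i-e+1+k)-\bigl(k(n-r)+\mu_k\bigr),
\]
and expanding $d(i-e+1+k)+s-k(n-r)-\mu_k=d(i+1)+(d+r-n)k+s-ed-\mu_k$ gives the stated formula after collecting the coefficient of $k$.

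The main subtlety, which I would treat carefully rather than gloss over, is the first application of the Jacobian Criterion in the relative setting: Lemma~\ref{pd-lm} as stated is over the base field $K$, so I need to justify that it applies verbatim over the intermediate field $\Frac(B_i/(\Delta_{i-e+1+k}\cap B_i))$, i.e. that $\Delta_{i-e+1+k}$ is prime (guaranteed by the quasi-prime hypothesis, so its residue ring is a domain and the fraction field makes sense) and that the contraction $\Delta_{i-e+1+k}\cap B_i$ is the genuine relative base over which the new variables and equations live. The point requiring verification is that no new variables among $\Y^{[i]}$ appear in $F^{(i-e+1)},\ldots,F^{(i-e+k)}$ beyond those already present in the base ring---this is exactly the reason $J_{k,i}$ involves only the columns $\partial/\partial\Y^{(i+1)},\ldots,\partial/\partial\Y^{(i+k)}$, since each $F^{(i-e+j)}$ has order at most $i+j$ in the $\Y$ variables. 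Once this compatibility of the differential structure with the triangular shape of $J_{k,i}$ is pinned down, the rest is the bookkeeping of the arithmetic above.
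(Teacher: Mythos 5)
Your proof is correct and takes essentially the same route as the paper's: part (1) applies the Jacobian Criterion relatively over the intermediate field $\Omega_{i,k}=\Frac(B_i/(\Delta_{i-e+1+k}\cap B_i))$, viewing $\Frac(B_{i+k}/\Delta_{i-e+1+k})$ as the fraction field of $\Omega_{i,k}[\Y^{(i+1)},\ldots,\Y^{(i+k)}]/(F^{(i-e+1)},\ldots,F^{(i-e+k)})$ with Jacobian $J_{k,i}$ and the standing rank-independence hypothesis, and part (2) combines this with Theorem~\ref{pd-thm} through additivity of transcendence degree in the same tower. Your added verification that each $F^{(i-e+j)}$ has order at most $i+j$, so that $J_{k,i}$ really is the full relative Jacobian, is a point the paper leaves implicit but does not change the argument.
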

\begin{proof}
\begin{enumerate}
  \item We can consider the fraction field of $\Frac(B_{i+k}/\Delta_{i-e+1+k})$ as the fraction field of $$\Omega_{i,k}[\Y^{(i+1)},\ldots,\Y^{(i+k)}]/(F^{(i-e+1)},\ldots,F^{(i-e+k)}).$$
       Therefore by Lemma \ref{pd-lm}, the transcendence degree of the field extension equals $kn-\rank_{\kappa}(J_{k,i})=kn-(kr-\mu_k)=k(n-r)+\mu_k.$
  \item Since when $i+k\ge\rho+e-1$, by Theorem \ref{pd-thm}, $\trdeg_K(\Frac(B_{i+k}/\Delta_{i-e+1+k}))=d(i-e+1+k)+s$, we have
  \begin{align*}
  \trdeg_K(\Frac(B_{i}/(\Delta_{i-e+1+k}\cap B_{i})))&=d(i-e+1+k)+s-k(n-r)-\mu_k\\
  &=d(i+1)+(d+r-n)k+s-ed-\mu_{k}.
  \end{align*}
\end{enumerate}
\end{proof}

\begin{lemma}\label{di-lemma2}
Let $E_1,E_2,\ldots,E_t\in K^{p\times q}$ and
\[N_k:=\begin{pmatrix}
E_1&&&&&\\
E_2^{(1)}&E_1^{(1)}&&&&\\
\vdots&\vdots&\ddots&&&\\
E_t^{(t-1)}&E_{t-1}^{(t-1)}&\cdots&E_1^{(t-1)}&&\\
&\ddots&\ddots&\ddots&\ddots&\\
&&E_t^{(k-1)}&E_{t-1}^{(k-1)}&\cdots&E_1^{(k-1)}
\end{pmatrix}.\]
Then for $k$ large enough, there exists $d'\in\N$ and $a'\in\Z$ such that
\[\rank(N_k)=d'k+a'.\]
Moreover, the least $k$ such that the above equality holds is bounded by $(t-1)(\min\{p,q\}+2)$.
\end{lemma}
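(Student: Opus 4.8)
The plan is to mirror the proof of Lemma \ref{pd-lemma1}, since $N_k$ is the lower-triangular ``band-Toeplitz under $\sigma$-shift'' analogue of $M_k$: reading the $i$-th block row of $N_k$ from left to right one finds $E_t^{(i-1)},\ldots,E_1^{(i-1)}$, all carrying the common superscript $i-1$ and occupying block columns $i-t+1,\ldots,i$. Consequently the $(i{+}1)$-st block row is obtained from the $i$-th by applying $\sigma$ and shifting one block to the right, so that $N_{k+1}=\left(\begin{smallmatrix}N_k&0\\ R_k&E_1^{(k)}\end{smallmatrix}\right)$, where $R_k$ is supported only in the last $t-1$ block columns. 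This is exactly the recursive shape exploited for $M_k$, merely reflected across the diagonal and with the internal order of the blocks $E_1,\ldots,E_t$ reversed; indeed, permuting block rows and block columns by the reversal $j\mapsto k+1-j$ turns $N_k$ into a matrix with the same sparsity pattern and block ordering as $M_k$ (so $\rank(N_k)$ is unchanged), the only residual difference being that the superscript in the $i$-th block row becomes $k-i$ rather than $i-1$. Since this per-row superscript cannot be factored out ($\sigma$ need not be surjective on $K$), I do not attempt to reduce the statement to Lemma \ref{pd-lemma1} directly, but instead rerun its argument.

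First I would perform Gaussian elimination on an initial window of block rows of $N_k$ exactly as in Lemma \ref{pd-lemma1}, producing rows whose nonzero part is confined to a fixed band of $(t-1)q$ columns, and collect the fragments of these rows into a subspace $U_0\subseteq K^{(t-1)q}$ via a projection operator $\pi$ analogous to the one introduced there. Running the elimination downward across successive block rows and using the recursion $V_{j+1}=V_j^{(1)}$ for the rows of the newly incorporated block, I would define $U_{j+1}:=\pi\big(\Span(U_j\times\{0\}^{q}\cup V_{j+1})\big)$ and establish the two structural facts that drive the argument: that $U_j^{(1)}\subseteq U_{j+1}$ for all $j$, and the propagation property that $U_j^{(1)}=U_{j+1}$ forces $U_{j+1}^{(1)}=U_{j+2}$. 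Both follow, as before, from the commutation of $\pi$ with $\sigma$ together with Lemma \ref{pd-lm2}, which guarantees $\dim(U_j)=\dim(U_j^{(1)})$.

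These two facts make $(\dim U_j)_{j\in\N}$ a non-decreasing sequence bounded above by $(t-1)\min\{p,q\}$, which therefore stabilizes after at most $(t-1)\min\{p,q\}$ steps; once $\dim U_j$ is constant the dimension of $\Span(U_j\times\{0\}^{q}\cup V_{j+1})$ is also eventually constant, so $\rank(N_k)$ increases by a fixed amount $d'$ at each further step and one obtains $\rank(N_k)=d'k+a'$ for $k$ large, with $d'\in\N$ and $a'\in\Z$. The remaining point is to convert the stabilization threshold for $U_j$ into a threshold for $k$, and I expect the main obstacle to lie precisely here. Because $N_k$ is lower-triangular, priming the elimination so that the band is genuinely in steady state costs one extra block-window compared with the $M_k$ case (the carry emanating from the truncated corner takes an additional $t-1$ steps to wash out), so the index correspondence should carry an offset of $2(t-1)$ rather than $t-1$. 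Tracking this offset carefully, while checking that the block-order reversal does not perturb the count and that no further transient is needed, is what upgrades the bound $(t-1)(\min\{p,q\}+1)$ of Lemma \ref{pd-lemma1} to the asserted $(t-1)(\min\{p,q\}+2)$.
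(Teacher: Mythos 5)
Your first paragraph is sound (in particular, the observation that the row/column reversal cannot be reduced to Lemma \ref{pd-lemma1} because $\sigma$ need not be surjective is exactly right), but the core of your plan --- rerunning the elimination of Lemma \ref{pd-lemma1} directly on $N_k$ from the top --- has a genuine gap: the two ``structural facts'' you claim follow ``as before'' are false for $N_k$. The induction in Lemma \ref{pd-lemma1} needs a base case, and there it is supplied by the $\sigma$-shift-invariance of the window: in $M_k$ every block row is the transform of the previous one shifted one block to the right. In $N_k$ this fails exactly for the first $t-1$ truncated block rows: block row $j+1$ equals the shifted transform of block row $j$ \emph{plus} an extra block $E_{j+1}^{(j)}$ in the first block column. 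Since your initial window (the first $t-1$ block rows, ``exactly as in Lemma \ref{pd-lemma1}'') consists precisely of these rows, neither $U_j^{(1)}\subseteq U_{j+1}$ nor the propagation property can be derived from commutation of $\pi$ with $\sigma$ and Lemma \ref{pd-lm2} alone. In fact they fail: take $t=2$, $p=1$, $q=2$, $K=\Q$ with $\sigma=\mathrm{id}$, $E_1=(1\ \ 0)$, $E_2=(0\ \ 1)$. Then $U_0=\Span\{(1,0)\}$, while $V_1=\{(0,1,1,0)\}$ gives
\[U_1=\pi_{4}^{2}\bigl(\Span\{(1,0,0,0),(0,1,1,0)\}\bigr)=\{0\},\]
so $U_0^{(1)}\not\subseteq U_1$ and $\dim U_0>\dim U_1$: the sequence $(\dim U_j)_j$ is not non-decreasing. (The conclusion of the lemma still holds in this example, since $\rank(N_k)=k$, but your mechanism does not produce it.) Correspondingly, the assertion in your last paragraph that ``the carry emanating from the truncated corner takes an additional $t-1$ steps to wash out'' is precisely the unproven point: the interference of the truncated rows with the state is structural, not a transient, and nothing in your argument bounds it.

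The paper's proof is organized exactly to avoid this. It deletes the $t-1$ truncated rows and works with the remaining full-band submatrix $C_k$, which \emph{is} shift-invariant, so the machinery of Lemma \ref{pd-lemma1} applies to it; the elimination is run from bottom to top and right to left so that the stabilized fragments sit in the leftmost $(t-1)q$ columns, the only columns the deleted rows touch. This yields $\rank(C_k)=d'(k-t+1)+s'$, with the threshold of Lemma \ref{pd-lemma1} measured in the row count $k-t+1$ of $C_k$. The truncated rows are then reinstated through the separate claim that $\rank(N_k)=\rank(C_k)+c$ with $c$ eventually constant, which holds because they are a fixed set of vectors supported in those leftmost columns and the shadow of the row space of $C_k$ there stabilizes along with the elimination. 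The bound $(t-1)(\min\{p,q\}+1)+(t-1)=(t-1)(\min\{p,q\}+2)$ thus comes from the shift of $t-1$ between the row counts of $C_k$ and $N_k$, not from a washing-out transient. If you wish to keep your top-down orientation, you must still split off the truncated rows (initialize the elimination at block row $t$) and then supply an argument that their contribution to the rank is eventually constant; without that split the argument does not go through.
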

\begin{proof}
Assume $k\ge 2t-2$. 
Let us consider the submatrix of $N_k$
\[C_k:=\begin{pmatrix}
E_t^{(t-1)}&E_{t-1}^{(t-1)}&\cdots&E_1^{(t-1)}&&&\\
&E_t^{(t)}&E_{t-1}^{(t)}&\cdots&E_1^{(t)}&&\\
&&\ddots&\ddots&\ddots&\ddots&\\
&&&E_t^{(k-1)}&E_{t-1}^{(k-1)}&\cdots&E_1^{(k-1)}
\end{pmatrix}.\]
Similarly to the proof of Lemma \ref{pd-lemma1}, apply the Gaussian elimination method to $C_k$, but from bottom to top, from right to left. Then we have that for $k$ large enough, there exists $d'\in\N$ and $s'\in\Z$ such that
\[\rank(C_k)=d'(k-t+1)+s',\]
and the least $k$ such that $\rank(C_k)=d'(k-t+1)+s'$ is bounded by $(t-1)(\min\{p,q\}+1)+t-1=(t-1)(\min\{p,q\}+2)$.
And as a consequence, for $k$ large enough, there exists a constant $c\in\N$ such that $\rank(N_k)=\rank(C_k)+c=d'(k-t+1)+s'+c$. Set $a'=-d'(t-1)+s'+c$, then for $k$ large enough, $\rank(N_k)=d'k+a'$ and the least $k$ such that $\rank(N_k)=d'k+a'$ is bounded by $(t-1)(\min\{p,q\}+2)$.
\end{proof}

Due to the above lemma, we can prove a formula of $\mu_k$ for $k\gg0$.
\begin{theorem}\label{di-thm1}
Suppose $F$ is a difference algebraic system which is quasi-prime at $\fp$. Assume the $\fp$-quasi dimension polynomial of $F$ is $\psi(k)=dk+s$. Then for $k\gg0$, there exists $a\in\Z$ such that
\[\mu_k=(d+r-n)k+a.\]
Moreover, an upper bound of the least $k$ such that the above equality holds is $e(\min\{r,n\}+2)$.
\end{theorem}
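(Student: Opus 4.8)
The plan is to identify the pseudo-Jacobian $J_{k,i}$, up to a difference transform, with a matrix of the type treated in Lemma~\ref{di-lemma2}, to read off the linear growth of $\mu_k$ from that lemma, and then to pin down the leading coefficient by comparison with the $\fp$-quasi dimension polynomial.

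First I would put $t:=e+1$ and, reducing modulo $\fp$, regard $E_j:=\partial F/\partial\Y^{(e-j+1)}$ for $j=1,\dots,t$ as matrices over the $\sigma$-field $\kappa$; since $\partial F/\partial\Y^{(j)}=0$ for $j<0$, these are exactly the nonzero blocks occurring in the second displayed form of $J_{k,i}$. Comparing the $(p,q)$-blocks of $J_{k,i}$ with those of the matrix $N_k$ built from $E_1,\dots,E_t$ shows that $J_{k,i}=N_k^{(i-e+1)}$, the transform exponent $i-e+1$ being $\ge 0$ because $i\ge e-1$. Hence Lemma~\ref{pd-lm2} gives $\rank_{\kappa}(J_{k,i})=\rank_{\kappa}(N_k)$. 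As $\kappa$ is a $\sigma$-field of characteristic $0$, Lemma~\ref{di-lemma2} applies over $\kappa$ with $p=r$, $q=n$, $t=e+1$, yielding $\rank_{\kappa}(N_k)=d'k+a'$ for all $k$ past a threshold bounded by $(t-1)(\min\{r,n\}+2)=e(\min\{r,n\}+2)$. Since $\mu_k=kr-\rank_{\kappa}(J_{k,i})$, this already gives $\mu_k=(r-d')k-a'$ on the same range, so it only remains to show $r-d'=d+r-n$, i.e. $d'=n-d$.

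The hard part is this identification of the leading coefficient, because $d'$ is produced by the truncated Jacobian $N_k$ (involving only the highest-order variables), whereas $d$ was defined in Theorem~\ref{pd-thm} through the full Jacobian $J_k$. To bridge them I would invoke the second item of Proposition~\ref{di-prop1}, which for $i+k\ge\rho+e-1$ reads
\[\trdeg_K\big(\Frac(B_i/(\Delta_{i-e+1+k}\cap B_i))\big)=d(i+1)+(d+r-n)k+s-ed-\mu_k.\]
Now fix $i$ and let $k\to\infty$: the ideals $\Delta_{i-e+1+k}\cap B_i$ form an ascending chain in the Noetherian ring $B_i$, hence stabilize, so the left-hand side is eventually constant in $k$. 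Substituting $\mu_k=(r-d')k-a'$ on the right, the coefficient of $k$ becomes $(d+r-n)-(r-d')=d-n+d'$; since the whole expression is eventually constant this coefficient must vanish, forcing $d'=n-d$. Therefore $\mu_k=(d+r-n)k-a'$ for $k\ge e(\min\{r,n\}+2)$, and setting $a:=-a'\in\Z$ finishes the proof, the stated bound being inherited directly from Lemma~\ref{di-lemma2}.
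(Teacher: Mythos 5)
Your proposal is correct, and its engine is the same as the paper's: the paper likewise proves the theorem by observing that $J_{k,e-1}$ is exactly the matrix $N_k$ of Lemma~\ref{di-lemma2} built from the blocks $\partial F/\partial\Y^{(e)},\ldots,\partial F/\partial\Y$ (so $t=e+1$, $p=r$, $q=n$), and by reading off the linear formula for $\rank(J_{k,e-1})$ together with the bound $e(\min\{r,n\}+2)$. The only divergence is the identification $d'=n-d$: the paper simply asserts it (it is implicit in the proof of Lemma~\ref{di-lemma2}, since the rank-governing staircase submatrix $C_k$ of $J_{k,e-1}$ is a transform of the same staircase that underlies the full Jacobian, whose growth rate is $n-d$ by Theorem~\ref{pd-thm}), whereas you derive it from Proposition~\ref{di-prop1}(2) combined with Noetherian stabilization of the ascending chain $(\Delta_{i-e+1+k}\cap B_i)_{k\in\N}$ of primes in $B_i$. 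Your argument is sound --- it is in fact the ``more straightforward'' derivation the paper itself records in the remark immediately following the theorem --- and it has the merit of making explicit a step the paper leaves unjustified, at the cost of invoking Proposition~\ref{di-prop1} where the paper's proof is purely matrix-theoretic.
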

\begin{proof}
Set $i=e-1$. Then for $k\gg0$, 
\[J_{k,e-1}=\begin{pmatrix}
\frac{\partial F}{\partial \Y^{(e)}}&&&&&\\
(\frac{\partial F}{\partial \Y^{(e-1)}})^{(1)}&(\frac{\partial F}{\partial \Y^{(e)}})^{(1)}&&&&\\
\vdots&\vdots&\ddots&&&\\
(\frac{\partial F}{\partial \Y})^{(e)}&(\frac{\partial F}{\partial \Y^{(1)}})^{(e)}&\cdots&(\frac{\partial F}{\partial \Y^{(e)}})^{(e)}&&\\
&\ddots&\ddots&\ddots&\ddots&\\
&&(\frac{\partial F}{\partial \Y})^{(k-1)}&(\frac{\partial F}{\partial \Y^{(1)}})^{(k-1)}&\cdots&(\frac{\partial F}{\partial \Y^{(e)}})^{(k-1)}
\end{pmatrix}.\]
So by Lemma \ref{di-lemma2}, for $k\gg0$, there exists $d'\in\N$ and $a'\in\Z$ such that $\rank(J_{k,e-1})=d'k+a'$, and the least $k$ such that $\rank(J_{k,i})=d'k+a'$ is bounded by $e(\min\{r,n\}+2)$. Note that $d'=n-d$. Set $a=-a'$. Hence for $k\gg0$, $\mu_k=kr-\rank(J_{k,e-1})=(d+r-n)k+a$, and an upper bound of the least $k$ such that $\mu_k=(d+r-n)k+a$ is $e(\min\{r,n\}+2)$.
\end{proof}

\begin{remark}
Let $\rho$ be the $\fp$-quasi regularity degree of the system $F$. From the proof, we actually have a more accurate upper bound for the least $k$ such that $\mu_k=(d+r-n)k+a$, namely, $\rho+e$.
\end{remark}
\begin{remark}
In fact, we can deduce the formula of $\mu_k$ for $k\gg0$ in a more straightforward way.
Fix an index $i\in\N_{\ge e-1}$. By Proposition \ref{di-prop1}, we have $\psi(i-e+1+k)=k(n-r)+\mu_k+\trdeg_K(\Omega_{i,k})$.
Note that $\trdeg_K(\Omega_{i,k})$ is a constant for $k\gg0$ since the increasing chain $(\Delta_{i-e+1+k}\cap B_i)_{k\in\N}$ of prime ideals in the ring $B_i$ is stable. So by Theorem \ref{pd-thm}, $\mu_k$ is a polynomial of degree one for $k\gg0$.
\end{remark}

\begin{definition}
In the above theorem, the least integer $k$ such that $\mu_k=(d+r-n)k+a$ is called the {\em $\fp$-difference index} of the system $F$, which is denoted by $\omega$. If $[F]$ is itself a $\D$-prime ideal, we say simply the difference index of $F$.
\end{definition}

It is obvious from the construction that $\omega$ is depending on the choice of the minimal $\D$-prime ideal $\fp$ over $[F]$. However, we will prove some properties of $\omega$ which meet our expectation for difference indices.

\section{Properties of $\fp$-difference index}
A notable property of most differentiation indices is that they provide an upper bound for the number of derivatives of the system needed to obtain all the equations that must be satisfied by the solutions of the system. This case is also suitable for the $\fp$-difference indices defined above.
\begin{theorem}\label{mc-tm}
Suppose $F$ is a difference algebraic system which is quasi-prime at $\fp$. Let $\rho$ and $\omega$ be the $\fp$-quasi regularity degree and the $\fp$-difference index of the system $F$ respectively. Then, for $i\in\N_{\ge e-1}$ such that $i+\omega\ge\rho+e-1$, the equality of ideals
\[\Delta_{i-e+1+\omega}\cap B_i=\Delta\cap B_i\]
holds in the ring $B_i$. Moreover, for every $i\in\N_{\ge e-1}$, let $h_i:=\min\{h\in\N:\Delta_{i-e+1+h}\cap B_i=\Delta\cap B_i\}$. Then if $i+\omega\ge\rho+e-1$ and $i+h_i\ge\rho+e-1$, then $\omega=h_i$.
\end{theorem}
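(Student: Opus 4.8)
The plan is to deduce the ideal equality from a comparison of transcendence degrees, exploiting that both $\Delta_{i-e+1+\omega}\cap B_i$ and $\Delta\cap B_i$ are prime ideals of $B_i$ with the former contained in the latter. Since $\Delta_{i-e+1+\omega}$ is prime in $B_{i+\omega}$ by the quasi-prime hypothesis and $\Delta$ is $\D$-prime in $K\{\Y\}_\fp$, their contractions to the subring $B_i$ are prime, and the inclusion $\Delta_{i-e+1+\omega}\subseteq\Delta$ (all generators $F,F^{(1)},\ldots,F^{(i-e+\omega)}$ lie in the $\D$-ideal $\Delta$) gives $\Delta_{i-e+1+\omega}\cap B_i\subseteq\Delta\cap B_i$. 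Because $B_i$ is a localization of the polynomial ring $A_i$, hence a Noetherian domain essentially of finite type over $K$, for two primes $P\subseteq Q$ one has $P=Q$ as soon as $\trdeg_K\Frac(B_i/P)=\trdeg_K\Frac(B_i/Q)$, since a strict inclusion of primes in such a ring strictly lowers the transcendence degree of the residue domain. Thus it suffices to show the two residue fields have the same transcendence degree over $K$.

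First I would compute $\trdeg_K\Frac(B_i/(\Delta_{i-e+1+\omega}\cap B_i))$. By Proposition \ref{di-prop1}(2), which applies because $i+\omega\ge\rho+e-1$, it equals $d(i+1)+(d+r-n)\omega+s-ed-\mu_\omega$; substituting $\mu_\omega=(d+r-n)\omega+a$ from Theorem \ref{di-thm1} gives the value $d(i+1)+s-ed-a$, which no longer involves $\omega$. For the other side I would use that $(\Delta_{i-e+1+k}\cap B_i)_{k\in\N}$ is an increasing chain of primes of the Noetherian ring $B_i$ whose union is $\Delta\cap B_i$: indeed $K\{\Y\}_\fp=\bigcup_k B_{i+k}$ and $\bigcup_k\Delta_{i-e+1+k}=\Delta$, so intersecting with $B_i$ yields $\bigcup_k(\Delta_{i-e+1+k}\cap B_i)=\Delta\cap B_i$. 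Hence the chain stabilizes; evaluating the same formula at any sufficiently large $k$ (so that $\mu_k=(d+r-n)k+a$ by Theorem \ref{di-thm1} and $\Delta_{i-e+1+k}\cap B_i=\Delta\cap B_i$) gives $\trdeg_K\Frac(B_i/(\Delta\cap B_i))=d(i+1)+s-ed-a$ as well. The two transcendence degrees agree, so $\Delta_{i-e+1+\omega}\cap B_i=\Delta\cap B_i$.

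For the ``moreover'' part, the inequality $h_i\le\omega$ is immediate from the equality just proved and the minimality defining $h_i$. For the reverse inequality, note that for every $k\ge h_i$ the increasing chain with union $\Delta\cap B_i$ has already stabilized, so $\Delta_{i-e+1+k}\cap B_i=\Delta\cap B_i$; since $i+h_i\ge\rho+e-1$ forces $i+k\ge\rho+e-1$, Proposition \ref{di-prop1}(2) applies, and the transcendence-degree identity of the previous paragraph is, after rearranging, exactly the statement $\mu_k=(d+r-n)k+a$. Thus the linear formula for $\mu_k$ holds for every $k\ge h_i$, and by the definition of the $\fp$-difference index $\omega$ this yields $\omega\le h_i$. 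Combining the two inequalities gives $\omega=h_i$.

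The main obstacle I anticipate is the bookkeeping around the localizations: one must check that contraction of the relevant primes to $B_i$ stays prime, that $\bigcup_k(\Delta_{i-e+1+k}\cap B_i)=\Delta\cap B_i$ holds in the localized setting, and that equality of residue-field transcendence degrees genuinely forces equality of the nested primes in $B_i$. A secondary point is to keep the hypotheses $i+\omega\ge\rho+e-1$ and $i+h_i\ge\rho+e-1$ aligned precisely with the indices at which Proposition \ref{di-prop1}(2) is invoked; in particular I deliberately derive $\omega\le h_i$ from the fact that the formula for $\mu_k$ holds for all $k\ge h_i$, rather than from its failure for $k<h_i$, so as to sidestep the borderline case $i+h_i=\rho+e-1$ where the proposition would be unavailable just below $h_i$.
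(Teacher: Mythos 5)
Your proposal is correct and follows essentially the same route as the paper: both rest on the transcendence-degree formula of Proposition \ref{di-prop1}(2) combined with Theorem \ref{di-thm1}, the representation argument showing $\bigcup_k(\Delta_{i-e+1+k}\cap B_i)=\Delta\cap B_i$, and the fact that nested primes in (a localization of) a finitely generated $K$-domain with equal residue transcendence degree must coincide. The only cosmetic difference is that you obtain stabilization of the chain $(\Delta_{i-e+1+k}\cap B_i)_k$ from Noetherianity of $B_i$ and then compare transcendence degrees, whereas the paper deduces stationarity directly from the constancy of the transcendence degrees for $k\ge\omega$; your handling of the second part, which pins down the constant $a$ explicitly before concluding $\omega\le h_i$, is if anything slightly more careful than the paper's.
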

\begin{proof}
Fix the index $i\in\N_{\ge e-1}$ such that $i+\omega\ge\rho+e-1$. Let us consider the increasing chain $(\Delta_{i-e+1+k}\cap B_i)_{k\in\N}$ of prime ideals in the ring $B_i$. From Proposition \ref{di-prop1}, for $i+k\ge\rho+e-1$, we have
\begin{equation}\label{pdi-eq}
\trdeg_k(\Frac(B_{i}/(\Delta_{i-e+1+k}\cap B_{i})))=d(i+1)+(d+r-n)k+s-ed-\mu_{k}.
\end{equation}
Since $\mu_k=(d+r-n)k+a$ for $k\ge\omega$ by Theorem \ref{di-thm1}, $\trdeg_k(\Frac(B_{i}/(\Delta_{i-e+1+k}\cap B_{i})))=d(i+1)+s-ed-a$ for $k\ge\omega$. So all of the prime ideals $\Delta_{i-e+1+k}\cap B_{i}$ have the same dimension for $k\ge\omega$ and the chain of prime ideals becomes stationary for $k\ge\omega$.

It only remains to prove that the largest ideal of the chain coincides with $\Delta\cap B_i$. One inclusion is obvious. For the other, let $f$ be an arbitrary element of $\Delta\cap B_i$, then there exist difference polynomials $h,a_{lj}\in K\{\Y\}, h\notin\fp$ such that
\[f=\sum^r_{l=1}\sum_{j}\frac{a_{lj}f^{(j)}_l}{h}.\]
Let $N$ be the maximal order of the variables $\Y$ appearing in this equality. Then we have $f\in \Delta_{N-e+1}\subseteq B_N$ and hence $f\in\Delta_{N-e+1}\cap B_i$. Since the above chain of ideals is stationary for $k\ge\omega$, $f\in\Delta_{i-e+1+\omega}\cap B_i$. This completes the proof of the first assertion of the Theorem.

For the second part of the statement, by the definition of $h_i$, the transcendence degrees $\trdeg_K(\Frac(B_{i}/(\Delta_{i-e+1+k}\cap B_{i})))$ coincide for $k\ge h_i$, and hence by (\ref{pdi-eq}), $\mu_k$ becomes a polynomial of degree one for $k\ge h_i$. This implies that $\omega\le h_i$. The equality follows from the first part of the statement and the minimality of $h_i$.
\end{proof}

\begin{remark}
Taking $i=e-1$ in the last assertion of the above theorem, then if $\omega\ge\rho$ and $h_{e-1}\ge\rho$, we have the following equality for the $\fp$-difference index:
\[\omega=\min\{h\in\N:\Delta_{h}\cap B_{e-1}=\Delta\cap B_{e-1}\}\]
\end{remark}

The following proposition reveals a connection between the formula of $\mu_k$ for $k\gg0$ and the dimension polynomial of $\fp$.
\begin{prop}\label{di-prop2}
Assume the $\fp$-quasi dimension polynomial of the system $F$ is $\psi(k)=dk+s$ and for $k\gg0$, $\mu_k=(d+r-n)k+a.$ Then $d=\D\textrm{-}\dim(\fp)$ and $a=s-ed-\ord(\fp)$, where $\D\textrm{-}\dim(\fp)$ and $\ord(\fp)$ are the difference dimension and the order of $\fp$ respectively. In particular, if $\omega$ is the $\fp$-difference index of the system $F$, then $\mu_{\omega}=(d+r-n)\omega+s-ed-\ord(\fp).$
\end{prop}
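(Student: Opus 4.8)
The plan is to identify the transcendence-degree data produced by the quasi dimension polynomial and by the formula for $\mu_k$ with the classical difference dimension polynomial of $\fp$, and then simply read off $\D\textrm{-}\dim(\fp)$ and $\ord(\fp)$ from the latter. First I would combine Proposition \ref{di-prop1}(2) with the formula $\mu_k=(d+r-n)k+a$ valid for $k\gg0$ (Theorem \ref{di-thm1}). Substituting the latter into the former, the terms $(d+r-n)k$ cancel, so that for every fixed $i\in\N_{\ge e-1}$ and all $k\gg0$ with $i+k\ge\rho+e-1$,
\[\trdeg_K(\Frac(B_i/(\Delta_{i-e+1+k}\cap B_i)))=d(i+1)+s-ed-a,\]
a quantity independent of $k$.

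Next I would pass from $\Delta_{i-e+1+k}$ to $\Delta$. By Theorem \ref{mc-tm}, for $i$ with $i+\omega\ge\rho+e-1$ the chain $(\Delta_{i-e+1+k}\cap B_i)_k$ has stabilised at $\Delta\cap B_i$ once $k\ge\omega$; taking $k=\omega$, which realises $\mu_\omega=(d+r-n)\omega+a$ by the definition of $\omega$, gives
\[\trdeg_K(\Frac(B_i/(\Delta\cap B_i)))=d(i+1)+s-ed-a\qquad\text{for } i\gg0.\]
I would then recognise the left-hand side as the difference dimension polynomial of $\fp$. Since $\Delta=\fp\,K\{\Y\}_\fp$ by the minimality of $\fp$ together with quasi-primeness, localisation does not change the fraction field of the quotient, so $\Frac(B_i/(\Delta\cap B_i))=\Frac(A_i/(\fp\cap A_i))=K(\eta^{[i]})$, where $\eta$ is a generic zero of $\fp$. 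By definition the difference dimension polynomial $\omega_\fp$ satisfies $\omega_\fp(i)=\trdeg_K K(\eta^{[i]})$ for $i\gg0$, whence
\[\omega_\fp(i)=d(i+1)+(s-ed-a),\qquad i\gg0.\]

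Comparing this with the normal form $\omega_\fp(i)=\D\textrm{-}\dim(\fp)\,(i+1)+\ord(\fp)$, in which the leading coefficient is the difference dimension and the constant remaining after subtracting $\D\textrm{-}\dim(\fp)(i+1)$ is the order, yields $d=\D\textrm{-}\dim(\fp)$ and $\ord(\fp)=s-ed-a$, i.e. $a=s-ed-\ord(\fp)$. The final ``in particular'' assertion is then immediate: substituting $a=s-ed-\ord(\fp)$ into $\mu_\omega=(d+r-n)\omega+a$ gives $\mu_\omega=(d+r-n)\omega+s-ed-\ord(\fp)$.

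The step I expect to be the main obstacle is the bookkeeping that legitimises identifying $\trdeg_K(\Frac(B_i/(\Delta\cap B_i)))$ with the value $\omega_\fp(i)$ of the difference dimension polynomial. One must check that the contraction of $\Delta$ to $B_i$ is exactly $(\fp\cap A_i)B_i$ after localisation, that the two expressions in $i$ agree for all large $i$ (so that, being numerical polynomials, they coincide identically), and above all that the normalisation $\omega_\fp(t)=\D\textrm{-}\dim(\fp)(t+1)+\ord(\fp)$ is the precise convention defining $\D\textrm{-}\dim$ and $\ord$ here; once these are fixed, the remaining algebra is a routine cancellation.
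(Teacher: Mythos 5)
Your proposal is correct and follows essentially the same route as the paper's own proof: substitute the linear formula for $\mu_k$ into Proposition \ref{di-prop1}(2), use Theorem \ref{mc-tm} to replace $\Delta_{i-e+1+k}\cap B_i$ by $\Delta\cap B_i$, identify $\Frac(B_i/(\Delta\cap B_i))$ with $\Frac(A_i/(\fp\cap A_i))$ so that the dimension polynomial of $\fp$ applies, and compare coefficients of the two linear expressions in $i$. The ``bookkeeping'' concerns you raise at the end are exactly the points the paper handles implicitly, and they go through without difficulty.
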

\begin{proof}
Let $\rho$ be the $\fp$-quasi regularity degree of the system $F$. Fix an index $i\in\N_{\ge e-1}$ such that $i+\omega\ge\rho+e-1$. By Theorem \ref{mc-tm}, for $k\ge\omega$, $\Delta_{i-e+1+k}\cap B_i=\Delta\cap B_i$. Therefore, for $k\ge\omega$, by Proposition \ref{di-prop1} and Theorem \ref{di-thm1},
\begin{align*}
\trdeg_K(\Frac(B_{i}/(\Delta\cap B_{i})))&=\trdeg_K(\Frac(B_{i}/(\Delta_{i-e+1+k}\cap B_{i})))\\
&=d(i+1)+(d+r-n)k+s-ed-\mu_{k}\\
&=d(i+1)+s-ed-a.
\end{align*}
On the other hand, since $\Frac(B_{i}/(\Delta\cap B_{i}))=\Frac(A_{i}/(\fp\cap A_{i}))$, by the dimension polynomial of $\fp$ (see for instance \cite[Chapter 5]{wibmer}),
$$\trdeg_K(\Frac(B_{i}/(\Delta\cap B_{i})))=\D\textrm{-}\dim(\fp)(i+1)+\ord(\fp).$$ So
\begin{equation}\label{di-equ1}
d(i+1)+s-ed-a=\D\textrm{-}\dim(\fp)(i+1)+\ord(\fp)
\end{equation}
for all $i\in\N_{e-1}$ such that $i+\omega\ge\rho+e-1$. Compare the coefficients of $i$ on the two sides of the identity (\ref{di-equ1}), and it follows $d=\D\textrm{-}\dim(\fp)$ and $a=s-ed-\ord(\fp)$.
\end{proof}

\begin{remark}
Note that $\Delta_{i-e+1}\subseteq\Delta\cap B_i$, So we have $\psi(i-e+1)=d(i-e+1)+s\ge d(i+1)+\ord(\fp)$ and hence $s\ge ed+\ord(\fp)$. And by Proposition \ref{di-prop2}, $a=s-ed-\ord(\fp)\ge0$.
\end{remark}

\section{Applications of $\fp$-difference index}
\subsection{The Hilbert-Levin regularity}
For a $\D$-prime ideal $\fp$, the polynomial $\varphi(i)=\D\textrm{-}\dim(\fp)(i+1)+\ord(\fp)$ is known as the dimension polynomial of $\fp$ (see for instance \cite[Chapter 5]{wibmer}). The minimum of the indices $i_0$ such that $\varphi(i)=\trdeg_K(\Frac(A_i/(A_i\cap\fp)))$ for all $i\ge i_0$ is called the {\em Hilbert-Levin regularity} of $\fp$. The results developed on $\fp$-difference indices enable us to give an upper bound of the Hilbert-Levin regularity of $\fp$.
\begin{theorem}
Suppose $F$ is a difference algebraic system which is quasi-prime at $\fp$. Let $\rho$ and $\omega$ be the $\fp$-quasi regularity degree and the $\fp$-difference index of the system $F$ respectively. $\fp$ is a minimal $\D$-prime ideal over $[F]$. Then the Hilbert-Levin regularity of the $\D$-prime ideal $\fp$ is bounded by $e-1+\max\{0,\rho-\omega\}$.
\end{theorem}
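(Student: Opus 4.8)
The plan is to prove something slightly stronger than the stated bound, namely that the equality $\varphi(i)=\trdeg_K(\Frac(A_i/(A_i\cap\fp)))$ already holds for \emph{every} index $i\ge e-1+\max\{0,\rho-\omega\}$. Since the Hilbert-Levin regularity is by definition the least threshold beyond which this equality is valid, establishing it on this whole range immediately yields the asserted upper bound. So the entire task reduces to verifying one equality of transcendence degrees on the correct range of $i$, and the real work has all been done in the preceding sections.

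First I would pass to the localized setting. Because $B_i$ is the localization of $A_i$ at the prime $A_i\cap\fp$ and $\Delta$ coincides with $\fp$ in $K\{\Y\}_{\fp}$, the quotient $B_i/(\Delta\cap B_i)$ is exactly the residue field of $A_i\cap\fp$, so that $\trdeg_K(\Frac(A_i/(A_i\cap\fp)))=\trdeg_K(\Frac(B_i/(\Delta\cap B_i)))$; this is precisely the identification already used in the proof of Proposition \ref{di-prop2}. Next I would impose the two index constraints $i\ge e-1$ and $i+\omega\ge\rho+e-1$, whose conjunction is exactly $i\ge e-1+\max\{0,\rho-\omega\}$. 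Under these, Theorem \ref{mc-tm} gives $\Delta_{i-e+1+\omega}\cap B_i=\Delta\cap B_i$, allowing me to replace $\Delta\cap B_i$ by $\Delta_{i-e+1+\omega}\cap B_i$.

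Now I would evaluate the transcendence degree explicitly. Proposition \ref{di-prop1}(2), applied with $k=\omega$ (legitimate since $i+\omega\ge\rho+e-1$), gives $\trdeg_K(\Frac(B_i/(\Delta_{i-e+1+\omega}\cap B_i)))=d(i+1)+(d+r-n)\omega+s-ed-\mu_{\omega}$. Substituting the stable formula $\mu_{\omega}=(d+r-n)\omega+a$ from Theorem \ref{di-thm1} collapses this to $d(i+1)+s-ed-a$. Finally, invoking Proposition \ref{di-prop2}, namely $d=\D\textrm{-}\dim(\fp)$ and $a=s-ed-\ord(\fp)$, the expression becomes $\D\textrm{-}\dim(\fp)(i+1)+\ord(\fp)=\varphi(i)$. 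Chaining the identifications then gives $\trdeg_K(\Frac(A_i/(A_i\cap\fp)))=\varphi(i)$ for all $i\ge e-1+\max\{0,\rho-\omega\}$, which is the desired bound.

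I do not expect a genuine obstacle here, since each individual step is an application of an already-established result; the one thing that must be handled carefully is the index bookkeeping. The threshold $e-1+\max\{0,\rho-\omega\}$ is precisely the smallest $i$ that simultaneously satisfies $i\ge e-1$ (so that $B_i$ and $\Delta_{i-e+1+\omega}$ are defined and Theorem \ref{mc-tm} applies) and $i+\omega\ge\rho+e-1$ (so that both Theorem \ref{pd-thm} and Proposition \ref{di-prop1}(2) sit in their stable range), and I would make sure to state the combination of these two inequalities as a single condition before applying the dimension-polynomial formula.
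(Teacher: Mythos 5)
Your proof is correct, and it rests on exactly the same pillars as the paper's: the identification $\trdeg_K(\Frac(A_i/(A_i\cap\fp)))=\trdeg_K(\Frac(B_i/(\Delta\cap B_i)))$, Theorem \ref{mc-tm} at indices $i\ge e-1$ with $i+\omega\ge\rho+e-1$, and Proposition \ref{di-prop1}(2) applied with $k=\omega$. The difference is in the endgame. The paper reduces the bound to showing that the increment $\trdeg_K(\Frac(B_{i+1}/(\Delta\cap B_{i+1})))-\trdeg_K(\Frac(B_{i}/(\Delta\cap B_{i})))$ equals $\D\textrm{-}\dim(\fp)$ for every $i$ past the threshold, using only the identity $d=\D\textrm{-}\dim(\fp)$ from Proposition \ref{di-prop2}; implicitly this relies on the fact that $\varphi(i)$ and $\trdeg_K(\Frac(A_i/(A_i\cap\fp)))$ agree for $i\gg 0$, so that equal increments on the whole range propagate the equality backwards down to the threshold. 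You instead compute the value pointwise: substituting $\mu_\omega=(d+r-n)\omega+a$ (Theorem \ref{di-thm1}) and then both identities $d=\D\textrm{-}\dim(\fp)$ and $a=s-ed-\ord(\fp)$ of Proposition \ref{di-prop2} collapses the formula of Proposition \ref{di-prop1}(2) directly to $\varphi(i)$, so no backwards-propagation step is needed. This makes your write-up logically tighter and self-contained (it even fills in the step the paper leaves implicit), at the cost of invoking the full strength of Proposition \ref{di-prop2} rather than only the leading coefficient. Your index bookkeeping --- that the threshold $e-1+\max\{0,\rho-\omega\}$ is precisely the conjunction of $i\ge e-1$ and $i+\omega\ge\rho+e-1$ --- matches the paper's exactly.
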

\begin{proof}
Since for all $i\in\N$, we have $\Frac(A_i/(A_i\cap\fp))=\Frac(B_i/(B_i\cap\Delta))$. Therefore, $\trdeg_K(\Frac(A_i/(A_i\cap\fp)))=\trdeg_K (\Frac(B_i/(B_i\cap\Delta)))$ and so, it suffices to show that for all $i\ge e-1+\max\{0,\rho-\omega\}$, $\trdeg_K(\Frac(B_i/(B_i\cap\Delta)))+\D\textrm{-}\dim(\fp)=\trdeg_K(\Frac(B_{i+1}/(B_{i+1}\cap\Delta)))$.

Fix an index $i\ge e-1+\max\{0,\rho-\omega\}$. Since $i+\omega\ge\rho+e-1$, by Theorem \ref{mc-tm}, we have that $\Delta\cap B_i=\Delta_{i-e+1+\omega}\cap B_i$
and $\Delta\cap B_{i+1}=\Delta_{i-e+2+\omega}\cap B_{i+1}$. Thus, by Proposition \ref{di-prop1}, we obtain:
\begin{align*}
\trdeg_K(\Frac(B_{i+1}/(\Delta\cap B_{i+1})))&=d(i+2)+(d+r-n)\omega+s-ed-\mu_{\omega},\\
\trdeg_K(\Frac(B_{i}/(\Delta\cap B_{i})))&=d(i+1)+(d+r-n)\omega+s-ed-\mu_{\omega},
\end{align*}
where $d=\D\textrm{-}\dim(\fp)$ by Proposition \ref{di-prop2}. Hence, the result holds.
\end{proof}

\subsection{The ideal membership problem}
It is well-known that in polynomial algebra, the ideal membership problem is to decide if a given element $f\in A$ belongs to a fixed ideal $I\subseteq A$ for a polynomial ring $A$, and if the answer is yes, representing $f$ as a linear combination with polynomial coefficients of a given set of generators of $I$.

The ideal membership problem also exists in differential algebra and difference algebra. But unlike the case in polynomial algebra, this problem is undecidable for arbitrary ideals in differential algebra (see \cite{ga-crd}) and difference algebra. However, there are special classes of differential ideals for which the problem is decidable, in particular the class of radical differential ideals (\cite{etda}, see also \cite{rdi}).

When it comes to the representation problem, the differential case or the difference case involves another additional ingredient: the order $N$ of derivatives or transforms of the given generators of $I$ needed to write an element $f\in I$ as a polynomial linear combination of the generators and their first $N$ total derivatives or total transforms. The known order bounds seem to be too big, even for radical ideals (see for instance \cite{gp-bod}, where an upper bound in terms of the Ackerman function is given, or \cite{gu}, a better and more explicit upper bound). In \cite{iqr}, an order bound for quasi-regular differential algebraic systems is given, due to the properties of differential indices defined in the same paper. However, it seems that there does not exist any results on the corresponding bound in the difference case except a bound for quasi-regular difference algebraic systems in \cite{jie}. By virtue of Theorem \ref{mc-tm}, we are able to give an order bound for the ideal membership problem of a quasi-prime difference system. 

The following ideal membership theorem for polynomial rings will be used.
\begin{theorem}(\cite[Thoerem 3.4]{as})\label{adi-thm}
Let $K$ be a field and $g_1,\ldots,g_s\in K[y_1,\ldots,y_n]$ be a couple of polynomials whose total degrees are bounded by an integer $d$. Let $g$ be a polynomial belonging to the ideal generated by $g_1,\ldots,g_s$, then there exist polynomials $a_1,\ldots,a_s$ such that $g=\sum_{j=1}^sa_jg_j$ and $\deg(g_j)\le (2d)^{2^n}$ for $1\le j\le s$.
\end{theorem}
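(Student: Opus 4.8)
The plan is to establish this effective membership bound by Hermann's method: reduce the polynomial identity $g=\sum_j a_j g_j$ to a system of $K$-linear equations, and then bound, by induction on the number of variables $n$, the degree at which a solution must appear if one exists at all. (I read the conclusion as a bound on the cofactors, i.e.\ $\deg(a_j)\le(2d)^{2^n}$, since $\deg(g_j)\le d$ holds by hypothesis.) The whole argument is driven by converting an \emph{existence} statement into a uniform \emph{degree} statement about linear systems over $K[y_1,\ldots,y_n]$.

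First I would fix a trial degree bound $D$ and write each unknown cofactor as $a_j=\sum_{|\alpha|\le D}c_{j,\alpha}y^{\alpha}$ with indeterminate scalars $c_{j,\alpha}\in K$. Substituting into $\sum_j a_j g_j=g$ and comparing coefficients of each monomial $y^{\beta}$ turns the membership equation into a finite system of $K$-linear equations in the $c_{j,\alpha}$. A representation with $\deg(a_j)\le D$ exists precisely when this linear system is consistent, so the problem reduces to the following statement about linear systems over the polynomial ring: if $Mx=b$, with $M$ and $b$ having entries of degree $\le d$, is solvable over $K[y_1,\ldots,y_n]$, then it admits a solution whose entries have degree at most $(2d)^{2^n}$.

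I would prove this linear-algebra statement by induction on $n$. The base case $n=1$ exploits that $K[y_1]$ is a principal ideal domain, so a Smith normal form (equivalently the extended Euclidean algorithm) produces a solution of degree linear in $d$. For the inductive step I would apply a generic linear change of coordinates so that the relevant entries become, after normalization, monic in $y_n$; this allows me to view the system over $K(y_1,\ldots,y_{n-1})[y_n]$, again a principal ideal domain, and to solve it there with the $y_n$-degree controlled by the one-variable bound. Clearing the denominators coming from $K(y_1,\ldots,y_{n-1})$ and imposing the resulting solvability conditions yields a new linear system over $K[y_1,\ldots,y_{n-1}]$ to which the induction hypothesis applies. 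Composing the one-variable degree bound with the inductive bound in $n-1$ variables produces the squaring of the exponent that accounts for the $2^n$ in $(2d)^{2^n}$.

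The main obstacle, and the step deserving the most care, is the denominator-clearing in the inductive step: one must simultaneously control the $y_n$-degree of the solution obtained over the fraction field and the degrees in $y_1,\ldots,y_{n-1}$ of the coefficients produced after clearing denominators, all while keeping the auxiliary $(n-1)$-variable system within the degree regime demanded by the induction hypothesis. This bookkeeping is exactly where the doubly exponential growth is generated, and a careful genericity argument—guaranteeing the needed monicity in $y_n$ without inflating degrees—is essential. As this is a standard effective-Nullstellensatz-type bound, the complete verification of the constants is what is recorded in \cite{as}, and I would cite it for the final numerical details.
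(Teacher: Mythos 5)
The paper does not actually prove this statement: it is quoted (typo and all) from Aschenbrenner \cite[Thoerem 3.4]{as}, so there is no internal argument to compare yours against. Your reconstruction via Hermann's method --- encode membership at a trial degree as a $K$-linear system in the unknown coefficients, generalize to the statement that a solvable linear system $Mx=b$ over $K[y_1,\ldots,y_n]$ with entries of degree $\le d$ has a solution of degree $\le(2d)^{2^n}$, then induct on $n$ with the PID case $n=1$ as base, using a generic coordinate change for monicity in $y_n$, solving over $K(y_1,\ldots,y_{n-1})[y_n]$ and clearing denominators to produce the auxiliary $(n-1)$-variable system --- is precisely the classical argument underlying the cited result, and the squaring-per-variable bookkeeping is indeed what produces the doubly exponential exponent $2^n$. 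Your reading of the conclusion is also the right one: $\deg(g_j)\le(2d)^{2^n}$ is vacuous as written (the $g_j$ have degree at most $d$ by hypothesis) and must be a typo for $\deg(a_j)$.

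Two caveats. First, even with the typo fixed, the theorem as stated in the paper cannot be literally correct, because any such bound must depend on $\deg(g)$: taking $g=y_1^Mg_1$ with $M$ large, comparison of top-degree parts in $g=\sum_j a_jg_j$ forces $\max_j\deg(a_j)\ge M+\deg(g_1)-d$. Aschenbrenner's actual theorem assumes the degree bound $d$ applies to $g$ as well. Your linear-system formulation quietly repairs this, since you require the entries of both $M$ \emph{and} $b$ (hence $g$) to have degree $\le d$; when you specialize back to the membership statement you should make the hypothesis $\deg(g)\le d$ explicit, or state the bound with $\max\{d,\deg(g)\}$ in place of $d$. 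Second, your text is a proof strategy rather than a proof: the denominator-clearing step that you yourself identify as the crux is not carried out, and the constants are ultimately deferred to \cite{as}. That is no worse than what the paper does --- the paper defers everything to \cite{as} --- but it should be acknowledged as an outline of the standard argument rather than a self-contained verification.
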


We have the following effective ideal membership theorem for quasi-prime difference algebraic systems:
\begin{theorem}
Suppose $F$ is a quasi-prime difference algebraic system in the sense of Remark \ref{pd-re}. Let $\rho$ and $\omega$ be the quasi regularity degree and the difference index of the system $F$ respectively. Let $D$ be an upper bound for the total degrees of $f_1,\ldots,f_r$. Let $f\in K\{\Y\}$ be any $\D$-polynomial in the $\D$-ideal $[F]$ such that $\omega+\max\{0, \ord(f)-e+1\}\ge\rho$. Set $N:=\omega+\max\{-1,\ord(f)-e\}$. Then, a representation
\[f=\sum_{1\le i\le r,0\le j\le N}g_{ij}f^{(j)}_i\]
holds in the ring $A_{N+e}$, where each polynomial $g_{ij}$ has total degree bounded by $(2D)^{2^{(N+e+1)n}}$.
\end{theorem}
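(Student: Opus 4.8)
The plan is to reduce the difference-algebraic membership statement to the purely polynomial membership bound of Theorem~\ref{adi-thm}, using Theorem~\ref{mc-tm} to control the order of transforms that is actually needed. First I would invoke Theorem~\ref{mc-tm} to pin down the right order. Since $f\in[F]=\fp$ (recall we are in the setting of Remark~\ref{pd-re}, so $[F]$ is $\D$-prime and equals $\fp$), we have $f\in\Delta\cap A_{i}$ for $i:=\max\{e-1,\ord(f)\}$. The hypothesis $\omega+\max\{0,\ord(f)-e+1\}\ge\rho$ is precisely what guarantees $i+\omega\ge\rho+e-1$, so Theorem~\ref{mc-tm} applies and yields $\Delta\cap A_{i}=\Delta_{i-e+1+\omega}\cap A_{i}$. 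Tracking the index, $i-e+1+\omega=\max\{-1,\ord(f)-e\}+\omega=N$, and $\Delta_{N}$ is by definition the algebraic ideal generated by $f_1^{[N-1]},\ldots,f_r^{[N-1]}$ inside $A_{N-1+e}=A_{N+e}$. Hence $f$ lies in the ordinary polynomial ideal of $A_{N+e}$ generated by the $f_i^{(j)}$ for $1\le i\le r$ and $0\le j\le N-1$, i.e. a finite set of honest polynomials.

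Second I would apply the polynomial membership bound. The ring $A_{N+e}=K[\Y^{[N+e]}]$ is a polynomial ring in $(N+e+1)n$ variables, and the generators $f_i^{(j)}$ all have total degree bounded by $D$ (applying $\D$ to a polynomial does not raise its total degree, since $\D$ merely shifts each $y_l^{(m)}$ to $y_l^{(m+1)}$). Theorem~\ref{adi-thm}, with $s=rN$ generators, ambient variable count $(N+e+1)n$, and degree bound $d=D$, then produces polynomial coefficients $g_{ij}\in A_{N+e}$ with
\[
f=\sum_{1\le i\le r,\,0\le j\le N}g_{ij}f_i^{(j)},
\qquad
\deg(g_{ij})\le (2D)^{2^{(N+e+1)n}},
\]
which is exactly the asserted representation and bound. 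I would remark that writing the upper index of the inner sum as $N$ rather than $N-1$ is harmless: one may simply take $g_{iN}=0$, so the stated formula is consistent with membership in $\Delta_{N}$.

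The one genuinely delicate point, and the step I expect to be the main obstacle, is the bookkeeping that converts the containment in $\Delta\cap A_i$ into membership in the concrete polynomial ideal with the correct index $N$. I need to verify carefully that $\Delta\cap A_i$ and $\Delta\cap B_i$ coincide for the purpose at hand: in the quasi-prime setting of Remark~\ref{pd-re} there is no localization, so $B_i=A_i$ and $\Delta=[F]$, which removes this subtlety, but the correspondence between the local-ring formulation of Theorem~\ref{mc-tm} and the global polynomial ring $A_{N+e}$ must be stated explicitly. Beyond this, I must confirm that the two index-shift identities—$i-e+1+\omega=N$ and the equivalence of $i+\omega\ge\rho+e-1$ with the stated hypothesis on $\ord(f)$—hold in both regimes $\ord(f)\le e-1$ and $\ord(f)>e-1$; this is a short case analysis on the $\max$ expressions. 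Once these identities are secured, the degree estimate is an immediate application of Theorem~\ref{adi-thm} with no further difference-algebraic input.
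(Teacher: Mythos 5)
Your overall route is exactly the paper's: apply Theorem~\ref{mc-tm} at $i:=\max\{e-1,\ord(f)\}$ (and your check that the hypothesis $\omega+\max\{0,\ord(f)-e+1\}\ge\rho$ is precisely $i+\omega\ge\rho+e-1$ is correct), then invoke Theorem~\ref{adi-thm} in the polynomial ring $A_{N+e}$, which has $(N+e+1)n$ variables and whose generators $f_i^{(j)}$ still have degree at most $D$, to get the degree bound. This is literally the paper's two-sentence proof, fleshed out.

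However, your index bookkeeping --- the step you yourself flagged as the delicate one --- contains an off-by-one error. You assert $i-e+1+\omega=\max\{-1,\ord(f)-e\}+\omega=N$, but in fact $i-e+1=\max\{0,\ord(f)-e+1\}=\max\{-1,\ord(f)-e\}+1$ (check both cases $\ord(f)\le e-1$ and $\ord(f)\ge e$), so $i-e+1+\omega=N+1$. Hence Theorem~\ref{mc-tm} gives $f\in\Delta_{N+1}\cap A_i$, not $f\in\Delta_N\cap A_i$ as you claim. Your intermediate claim is strictly stronger than what the theorem yields, and it is false in general: by the second part of Theorem~\ref{mc-tm}, whenever $\omega=h_i$ the ideal $\Delta_{i-e+\omega}\cap B_i=\Delta_N\cap B_i$ is properly contained in $\Delta\cap B_i$, so some elements of the difference ideal of order $\le i$ do \emph{not} lie in $\Delta_N$. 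Fortunately the slip is self-correcting: $\Delta_{N+1}$ is by definition generated by $f_1^{[N]},\ldots,f_r^{[N]}$ inside $A_{(N+1)-1+e}=A_{N+e}$, i.e.\ exactly by the $f_i^{(j)}$ with $0\le j\le N$, so the corrected computation delivers the asserted representation directly, and your padding device $g_{iN}=0$ (a symptom of the miscount) becomes unnecessary. With this one-line repair, your argument coincides with the paper's proof.
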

\begin{proof}
The upper bound on the order of transforms of the polynomials $f_1,\ldots,f_r$ is a direct consequence of Theorem \ref{mc-tm} applied to $i:=\max\{e-1, \ord(f)\}$. The degree upper bound for the polynomials $g_{ij}$ follows from Theorem \ref{adi-thm}.
\end{proof}
\begin{remark}
Since we have an upper bound $e(\min\{r,n\}+2)$ for $\omega$, it suffices to take $N=e(\min\{r,n\}+2)+\max\{-1,\ord(f)-e\}$ to get more explicit upper bounds of the order and the degree in the above ideal membership problem.
\end{remark}

\section{An example}
\begin{example}
Notations follow as before. Consider the difference algebraic system $F=\{y_1^{(2)}-y_1,y_1^{(1)}-y_2,y_1y_2-1\}\subseteq A=K\{y_1,y_2\}$. Then $\Delta=[F]$ is a $\D$-prime ideal and $F$ is a quasi-prime system in the sense of Remark \ref{pd-re}. We have $n=2,r=3,e=2,d=0$. The corresponding matrices $J_{k},k=1,2,3,\ldots$ are
\[\begin{pmatrix}
-1&0&0&0&1&0&&&&&&\\0&-1&1&0&0&0&&&&&&\\y_2&y_1&0&0&0&0&&&&&&\\&&-1&0&0&0&1&0&&&&\\&&0&-1&1&0&0&0&&&&
\\&&y_1&y_2&0&0&0&0&&&&\\&&&&-1&0&0&0&1&0&&\\&&&&0&-1&1&0&0&0&&
\\&&&&y_2&y_1&0&0&0&0&&\\&&&&&&&&&&&\\&&&&&&\cdots&&\cdots&&\cdots&\\&&&&&&&&&&&
\end{pmatrix},\]
and $J_{k,1},k=1,2,3,\ldots$ are
\[\begin{pmatrix}
1&0&&&&&&&&&&\\0&0&&&&&&&&&&\\0&0&&&&&&&&&&\\0&0&1&0&&&&&&&&\\1&0&0&0&&&&&&&&\\0&0&0&0&&&&&&&&
\\-1&0&0&0&1&0&&&&&&\\0&-1&1&0&0&0&&&&&&\\y_2&y_1&0&0&0&0&&&&&&\\&&-1&0&0&0&1&0&&&&\\&&0&-1&1&0&0&0&&&&
\\&&y_1&y_2&0&0&0&0&&&&\\&&&&-1&0&0&0&1&0&&\\&&&&0&-1&1&0&0&0&&
\\&&&&y_2&y_1&0&0&0&0&&\\&&&&&&&&&&&\\&&&&&&\cdots&&\cdots&&\cdots&\\&&&&&&&&&&&
\end{pmatrix}.\]
Since $y_1^{(2i)}=y_1,y_1^{(2i+1)}=y_2,y_2^{(2i)}=y_2,y_2^{(2i+1)}=y_1$ in the ring $A/\Delta$ for all $i\in\N$, we have replaced $y_1^{(2i)},y_1^{(2i+1)},y_2^{(2i)},y_2^{(2i+1)}$ by $y_1,y_2,y_2,y_1$ respectively in $J_k$ and $J_{k,1}$ for all $i\in\N$. It can be computed that $\rank(J_{1})=3$, $\rank(J_{2})=5$, $\rank(J_{3})=7$. In fact, $\rank(J_{k})=2k+1$ for all $k\ge1$. So the quasi dimension polynomial of the system $F$ is $\psi(k)=2k+1$ and the quasi regularity degree $\rho=1$. Also, one can compute that $\rank(J_{1,1})=1$, $\rank(J_{2,1})=2$, $\rank(J_{3,1})=4$, $\rank(J_{4,1})=6$, so $\mu_1=2,\mu_2=4,\mu_3=5,\mu_4=6$. In fact, $\mu_k=k+2$ for all $k\ge 2$. Hence the difference index of the system $F$ is $\omega=2$. One can check that $\Delta_2\cap A_1=\Delta\cap A_1$.
\end{example}

{\bf Acknowledgements.} The author thanks Li Wei for helpful suggestions.

\end{document}